\date{\today}
\newtheorem{theorem}{Theorem}[section]
\newtheorem{lem}[theorem]{Lemma}
\theoremstyle{definition}
\newtheorem{definition}[theorem]{Definition}
\theoremstyle{remark}
\newtheorem{rem}{Remark}
\numberwithin{equation}{section}
\newcommand{\qf}[3]{\text{$(#1;\,#2)_{#3}$}}
\newcommand{\qnum}[1]{\text{$[#1]_{q}$}}
\newcommand{\qbinom}[2]{\text{$\genfrac[]{0pt}{}{#1}{#2}_q$}}
\newcommand{\qhyper}[6]{\,{}_{#1}\phi_{#2}\left(\!\!%
            \begin{array}{cc}{#3}\\[-0.1ex]{#4} \end{array}
            \Big|\,{#5};{#6}\right)}
\newcommand{\hyper}[5]{\,{}_{#1}F_{#2}\left(\!\!%
\begin{array}{cc}{\displaystyle{#3}}\\[-0.1ex]
{\displaystyle{#4}} \end{array}\Big|  \,{\displaystyle{#5}}
\right)}
\newcommand{\feriet}[8]{\,{}F^{#1}_{#2}\left(\!\!%
\begin{array}{c|c}
\begin{array}{cc}{\displaystyle{#3}:\displaystyle{#4}}\\[-0.1ex]
{\displaystyle{#5}:\displaystyle{#6}} \end{array} &
\,{\displaystyle{#7},\displaystyle{#8}}
\end{array} \right)}
\newcommand{\qferiet}[8]{\,{}\Phi^{#1}_{#2}\left[\!\!%
\begin{array}{c|c}
\begin{array}{cc}{\displaystyle{#3}:\displaystyle{#4}}\\[-0.1ex]
{\displaystyle{#5}:\displaystyle{#6}} \end{array} &\,
\begin{array}{cc}
\,{\displaystyle{#7}}\\[-0.1ex]
{\displaystyle{#8}}
\end{array}
\end{array} \right]}
\author[Area]{I. Area}
\author[Atakishiyev]{N. Atakishiyev}
\author[Godoy]{E. Godoy}
\author[Rodal]{J. Rodal}
\address[Area]{Departamento de Matem\'{a}tica Aplicada II, E.E. Telecomunicaci\'{o}n, 
Universidade de Vigo, 36310-Vigo, Spain.}\email[Area]{area@uvigo.es}
\address[Atakishiyev]{Instituto de Matem\'aticas, Unidad Cuernavaca, Universidad 
Nacional Aut\'o\-no\-ma de M\'exico, C.P. 62251 Cuernavaca, Morelos, M\'exico}
\email[Atakishiyev]{natig@matcuer.unam.mx}
\address[Godoy]{Departamento de Matem\'{a}tica Aplicada II, E.E. Industrial, 
Universidade de Vigo, 36310-Vigo, Spain.}\email[Godoy]{egodoy@dma.uvigo.es}
\address[Rodal]{Departamento de Matem\'atica Aplicada II, E.E. Telecomunicaci\'on, 
Universidade de Vigo, 36310-Vigo, Spain.}\email[Rodal]{jrodal@edu.xunta.es}
\begin{document}

\title[Bivariate $q$-orthogonal polynomials on $q$-linear lattices]
{Linear partial $q$-difference equations \\ on $q$-linear lattices and 
their bivariate \\ $q$-orthogonal polynomial solutions}

\subjclass[2010]{Primary 39A13, 39A14   \ Secondary 33D45, 33E30, 42C05, 47B39}
\keywords{$q$-derivative operator, $q$-integral, partial $q$-difference equations, 
$q$-Pearson system,  bivariate big $q$-Jacobi polynomials, bivariate $q$-orthogonal 
polynomials, generalized bivariate basic hypergeometric series}
\begin{abstract}
Orthogonal polynomial solutions of an admissible potentially self-adjoint linear second-order 
partial $q$-difference equation of the hypergeometric type in two variables on $q$-linear 
lattices are analyzed. A  $q$-Pearson's system for the orthogonality weight function, as 
well as for the difference derivatives of the solutions are presented, giving rise to a solution 
of the $q$-difference equation under study in terms of a Rodrigues-type formula. The 
monic orthogonal polynomial solutions are treated in detail, giving explicit formulae
for the matrices in the corresponding recurrence relations they sa\-tisfy.
Lewanowicz and Wo\'zny [S. Lewanowicz, P. Wo\'zny, J. Comput. Appl. Math. 233 (2010) 1554--1561] 
have recently introduced  a  (non-monic) bivariate extension of big $q$-Jacobi polynomials together 
with a partial $q$-difference equation of the hypergeometric type that governs them. This equation 
is analyzed in the last section: we provide two more orthogonal polynomial solutions, namely, a second 
non-monic solution from the Rod\-rigues' representation, and the monic solution both from the recurrence 
relation that govern them and also explicitly given in terms of generalized bivariate basic hypergeometric 
series.  Limit relations as $q \uparrow 1$ for the partial $q$-difference equation and for the all three 
$q$-orthogonal polynomial solutions are also presented.
\end{abstract}

\maketitle

\section{Introduction}

As recalled in \cite{MR1100286}, Hahn \cite{MR0030647} considered the operator
\begin{equation}\label{eq:hahnoperator}
L_{q,\omega} f(x)=(D_{q,\omega}f)(x)= \frac{f(qx+\omega)-f(x)}{(q-1)x+\omega}\,, 
\quad x \in {\mathbf{R}} \setminus \left \{ \frac{\omega}{1-q}\right\},
\end{equation}
for all $q \in {\mathbf{R}} \setminus \{-1,0\}$, $\omega \in {\mathbf{R}}$ and $(q,\omega) \neq (1,0)$,
which for $q=1$ becomes the finite difference operator $\Delta_{\omega}$, and if $q \neq 1$ and $\omega=0$ 
then $L_{q,0}$ is the $q$-derivative operator \cite[Eq.~(2.3)]{MR0030647}
\[
L_{q,0}f(x)=(D_q \,f)(x)=\frac{f(qx)-f(x)}{(q-1)x}, \quad x\ne 0,  \quad q \neq 1,
\]
and $(D_q \,f)(0):=f'(0)$ by continuity, provided $f'(0)$ exists. Note that $\lim_{q \uparrow 1} (D_q \, f)(x)=f'(x)$ 
if $f$ is differentiable.

Hahn seems to have been the first to realize that the characterizations of classical orthogonal 
polynomial sequences (OPS) based on derivatives and differential equations are too restrictive 
\cite{MR1342384}. He posed and solved the following problems: find all OPS such that one of 
the following holds
\begin{enumerate}
\item $\{ L_{q,0}P_{n}(x) \}$ is also OPS;
\item $\{P_{n}(x)$\} satisfy the functional equation
\[
\sigma(x) L_{q,0}^{2} P_{n}(x) + \tau(x) L_{q,0}P_{n}(x) + \lambda_{n} P_{n}(x)=0;
\]
\item $P_{n}(x)$ has the representation
\[
P_{n}(x)=\frac{1}{\varrho(x)} L_{q,0}^{n} \left\{ f_{0}(x)f_{1}(x) \cdots f_{n-1}(x) \varrho(x) \right \},
\]
where $f_{k}(x)=f_{k+1}(qx)$ ;
\item If $P_{n}(x)=\sum a_{nk} x^{k}$ then $a_{nk}/a_{n,k-1}$ is a rational function of $q^{n}$ and $q^{k}$ ;
\item The moments associated with $\{P_{n}(x)\}$ satisfy
\begin{equation}\label{rrm}
M_{n}=\frac{a+bq^{n}}{c+d q^{n}} M_{n-1}, \quad a\,d-b\,c \neq 0\,,
\end{equation}
where $M_{n}$ are either the power moments (moments against $x^{k}$)
or the generalized moments (moments against the $q$-shifted factorial $\qf{x}{q}{k}
=\prod_{j=0}^{k-1}(1-xq^{j})$).
\end{enumerate}

Hahn's investigation led him to the most general set of polynomials belonging to
this class \cite{MR1342384},  now called $q$-Hahn polynomials:
\begin{equation}\label{qhahn}
Q_{n}(x;a,b,N;q)=\qhyper{3}{2}{q^{-n},abq^{n+1},x}{aq,q^{-N}}{q}{q}, \quad n=0,1,\dots,N.
\end{equation}

Following the works of Nikiforov and Uvarov \cite{MR1149380,0378.33001,MR922041}, a review 
of the hypergeometric-type difference equation for a function $y(x(s))$ on a non-uniform lattice 
$x(s)$ has been given in \cite{MR1342384}. Note that the difference-derivatives of $y(x(s))$ 
satisfy similar (to the initial ones for $y(x(s))$) equations if and only if the lattice $x(s)$ has 
the form
\begin{align}
x(s)&=c_{1} q^{s} + c_{2} q^{-s} + c_{3}, \qquad \text{or} \label{qquadratic} \\
 x(s)&=c_{4}s^{2}+c_{5}s+c_{6}, \label{quadratic}
\end{align}
where $q$, $c_{1}$, $c_{2}$ and $c_{3}$ are constants. Depending on the particular choice of constants, 
the lattices are commonly referred to as
\begin{enumerate}
\item Linear lattices if we choose in (\ref{quadratic}) $c_{4}=0$ and $c_{2} \neq 0$.
\item Quadratic lattices if we choose in (\ref{quadratic}) $c_{4} \neq 0$.
\item $q$-linear lattices (or $q$-exponential lattices) if we choose in (\ref{qquadratic}) $c_{2}=0$ 
and $c_{1} \neq 0$.
\item $q$-quadratic lattices if we choose in (\ref{qquadratic}) $c_{1}c_{2} \neq 0$.
\end{enumerate}

The Askey tableau of hypergeometric orthogonal polynomials contains the classical orthogonal polynomials, 
which can be written in terms of a hypergeometric function, starting at the top with Wilson and Racah 
polynomials on quadratic lattices and ending at the bottom with Hermite polynomials \cite{MR838967}. 
Hahn \cite{MR0030647}  actually studied the $q$-analogue of this scheme. So, there are $q$-analogues 
of all the families in the Askey tableau, often several $q$-analogues for one classical family. The master 
class of all these $q$-analogues is formed by the Askey-Wilson polynomials \cite{MR783216} on the 
$q$-quadratic lattice $x(s)=(q^{s}+q^{-s})/2$, which contain all other families as special or limit 
cases \cite{MR838970}. In \cite{MR1431306} Koornwinder gave a $q$-Hahn tableau: a $q$-analogue 
of the part of the Askey tableau dominated by $q$-Hahn polynomials,  in the $q$-linear lattice $x(s)=q^{s}$. 
Basic hypergeometric functions and $q$-orthogonal polynomials for arbitrary (including complex) values 
of $q$ are connected with quantum algebras and groups \cite{MR1371383}.

Recently, Koekoek, Lesky and Swarttouw  \cite{MR2656096} presented a classification 
of all families of classical orthogonal polynomials and their $q$-analogues, the classical 
$q$-orthogonal polynomials, as orthogonal polynomial solutions of the 
eigenvalue problem
\begin{equation}\label{eq:eqq1}
\phi(x) (D_{q,w})^{2}\,y_{n}(x) + \psi(x) (D_{q,w})y_{n}(x) = \lambda_{n} y_{n}(qx+w),
\end{equation}
where $D_{q,w}$ is the Hahn's operator (\ref{eq:hahnoperator}), $\phi(x)$ is a polynomial 
of at most degree $2$, $\psi(x)$ is a polynomial of exact  degree $1$, and $\lambda_{n}$ 
is the spectral parameter.

Besides well-known three-term recurrences, that $q$-orthogonal polynomial solutions of the 
latter equation do satisfy \cite{MR0481884,MR922041,MR0372517}, these solutions can be 
characterized in a number of ways, e.g. $k$-th $q$-derivatives of each family are again 
orthogonal and belong to the same family \cite{MR1100286,MR922041}. Moreover,  the 
orthogonality weight functions satisfy $q$-Pearson  equations \cite{MR1545034,MR1149380}, 
giving rise to Rodrigues' formulae \cite{MR1100286,MR922041} for the corresponding ortho\-gonal 
polynomials and their derivatives of any order. Also, the orthogonal polynomials posess a number 
of algebraic and $q$-difference properties, expressed  as $q$-derivative re\-presentations 
\cite{MR1100286,MR2241592,MR1545034}  and  structure relations \cite{MR1100286,MR0481884,
MR2345243}. The list of the above references is not exhaustive but only indicative for the kind 
of references that could be consulted on this topic.

It is quite remarkable that in these classical settings the coefficients, appearing in all of the 
aforementioned  algebraic and differential characterizations, can be explicitly computed in 
terms of the polynomial coefficients $\phi(x)$ and $\psi(x)$ of the hypergeometric-type 
$q$-difference equation (\ref{eq:eqq1}) \cite{MR2656096,MR922041}, which 
governs those $q$-classical families.

Orthogonal polynomials in several variables have been analyzed since a long time ago
\cite{hermite1864} and we refer to the books of Suetin \cite{MR1717891} and Dunkl 
and Xu \cite{MR1827871}, as basic references on this topic. Various multivariate 
extensions have been used in many applications such as image description and pattern 
recognition \cite{MR2929905}, or ternary drug mixtures \cite{Maher200895}, among 
others.

In 1991 Tratnik introduced some multivariable extensions of univariate orthogonal polynomials 
(see \cite{MR1123596,MR1122519} and references therein). Moreover, $q$-analogues of 
these systems have been constructed by Gasper and Rahman \cite{MR2132464,MR2132465,
MR2281173}, yielding systems of multivariable orthogonal Askey-Wilson polynomials and 
their special and limit cases. Bispectrality of multivariable Racah-Wilson and Askey-Wilson 
polynomials has been studied in \cite{MR2608419} and \cite{MR2784425}, respectively.

As indicated in \cite{MR2608419},  a beautiful extension of univariate orthogonal polynomials 
to the multivariate case is exemplified by symmetric Macdonald-Koornwinder polynomials, see, 
for instance, \cite{MR918416,MR1199128,MR1354144,MR1411136}.

In more recent papers the second-order linear partial differential equations of the hypergeometric type 
\cite{MR2853206} and their discretization on uniform lattices \cite{AGR2012,AG2013,MR2289245,MR2390890}, 
as well as a general way of introducing orthogonal polynomial families in two discrete variables on the simplex 
\cite{MR2117368}, have been analyzed. Therefore, it is possible to generalize the univariate classical 
orthogonal polynomials to the bivariate and multivariate versions by requiring that they obey a second-order 
partial differential equation of the hypergeometric type (continuous case) \cite{MR0228920,MR1717891}, or 
a second-order partial difference equation of the hypergeometric type (discrete case), as indicated before. 
Thus, the ``continuous'' polynomials can be analyzed as limits of the ``discrete'' ones \cite{AG2013}. 
Likewise, the corresponding differential operator will appear as a scaling limit of an appropriate difference 
operator, and the continuous distribution (the weight measure for the bivariate continuous polynomials) 
is obtained through a scaling limit from the discrete  distribution (the weight for the bivariate discrete 
polynomials).

The main goal of this paper is to extend the latter results on continuous and discrete bivariate cases 
to an admissible potentially self-adjoint linear second-order partial $q$-difference equation of the 
hypergeometric type on particular non-uniform lattices, and to study their orthogonal polynomial 
solutions.

The paper is organized as follows. In Section \ref{SEC:2} the linear second-order partial $q$-difference 
equations of the hypergeometric type are introduced, giving explicitly the coefficients of the equation for 
the partial $q$-derivatives  (of arbitrary order) of any solution  in terms of the coefficients of the initial 
equation. In Section \ref{S:3} we study the admissibility conditions for partial $q$-difference equations 
of the hypergeometric type. Next, in Section 4, the partial $q$-difference equation is written in the 
self-adjoint form,  which gives a number of useful identities for the orthogonality weight function for 
the polynomial solutions ($q$-Pearson's system), as well as of the $q$-difference derivatives of the polynomial 
solutions. The key point is to determine the orthogonality weight function from the polynomial coefficients 
of the initial equation, which is also explicitly worked out. In the remaining part of the paper we deal with 
admissible potentially self-adjoint linear second-order partial $q$-difference equations of the hypergeometric 
type. In Section \ref{S:5} an analogue of the well-known Rodrigues' formula for classical 
orthogonal polynomials is presented for orthogonal polynomial solutions of the partial $q$-difference equation. 
The monic orthogonal polynomial solutions of the partial $q$-difference equation are analyzed in detail in 
Section \ref{S:6}, where we give explicitly the matrices of the corresponding three-term recurrence relations 
for the most general equation, which  belongs to the class  under study.  Section \ref{section:example} is 
related with  bivariate big $q$-Jacobi polynomials and a partial $q$-difference equation, that govern them  
\cite{MR2559345}. Two novel bivariate $q$-orthogonal polynomial solutions of this equation are explicitly given. 
The first (non-monic) one is constructed from the Rodrigues' representation, derived in Section \ref{S:5}. 
The second novel (monic) solution of the equation is obtained from the general analysis, given in 
Section \ref{S:6}, i.e. by employing in this particular case the matrices of the three-term recurrence 
relations for the vector column of polynomials. Besides, this monic solution is also explicitly given in terms 
of generalized bivariate basic hypergeometric series.  Finally, limit  relations as $q \uparrow 1$  for the 
partial $q$-difference equation, as well as for the three above-mentioned solutions, are analyzed in detail.

\section{A linear second-order partial $q$-difference equation of the hypergeometric type}\label{SEC:2}

In what follows we shall assume that $0<q<1$ and we shall consider disconnected or independent 
non-uniform lattices \cite{MR1149380} as
\begin{equation}\label{independent}
x= x{(s)}=q^{s},\quad y= y{(t)}=q^{t}.
\end{equation}
Related with these $q$-linear lattices (\ref{independent}), let us introduce the following partial $q$-difference operators 
\begin{gather}
D_{q}^{1}f(x,y)=\frac{f(qx,y)-f(x,y)}{(q-1)x}, \quad D_{q}^{2}f(x,y)=\frac{f(x,qy)-f(x,y)}{(q-1)y}, 
\label{eq:defdq}\\
D_{q^{-1}}^{1}f(x,y)=\frac{q \left(f(x,y)-f\left({x}/{q},y\right)\right)}{(q-1) x}, \quad
D_{q^{-1}}^{2}f(x,y)=\frac{q \left(f(x,y)-f\left(x,{y}/{q}\right)\right)}{(q-1) y}. \label{eq:defdqm1}
\end{gather}
The rules for the partial $q$-derivatives of a product of two functions
$f(x,y)$ and $g(x,y)$ are given by
\begin{align}
D_{q}^{1}{(f g)}(x,y)&=f(x,y) D_{q}^{1}{g}(x,y) + g(qx,y) D_{q}^{1}{f}(x,y), \label{E:DQP1}\\
D_{q}^{2}{(f g)}(x,y)&=f(x,y) D_{q}^{2}{g}(x,y) + g(x,qy)
D_{q}^{2}{g}(x,y). \label{E:DQP2}
\end{align}

The following readily verified relations will also be used
\begin{equation}\label{EC:OPCON}
\begin{cases}
D_{q}^{1}D_{q^{-1}}^{2}{f}(x,y)=D_{q^{-1}}^{2}D_{q}^{1}{f}(x,y) ,
\qquad
\displaystyle{D_{q}^{2}D_{q^{-1}}^{1}{f}(x,y)=D_{q^{-1}}^{1}D_{q}^{2}{f}(x,y)} , \\
\displaystyle{D_{q}^{1}D_{q}^{2}{f}(x,y)=D_{q}^{2}D_{q}^{1}{f}(x,y) , \qquad
D_{q^{-1}}^{1}D_{q^{-1}}^{2}{f}(x,y)=D_{q^{-1}}^{2}D_{q^{-1}}^{1}{f}(x,y) }, \\
\displaystyle{D_{q^{-1}}^{1}{f}(x,y) = D_{q}^{1}{f}(x,y) + (1-q)x D_{q}^{1}D_{q^{-1}}^{1}{f}(x,y)} , \\
\displaystyle{D_{q^{-1}}^{2}{f}(x,y) = D_{q}^{2}{f}(x,y) + (1-q)y D_{q}^{2}D_{q^{-1}}^{2}{f}(x,y)} .
\end{cases}
\end{equation}

The following linear second-order partial differential equation has been considered in  \cite{MR2853206,0765.33009}
\begin{multline}\label{E:5J1}
\tilde{a}_{11}(x,y)\frac{\partial^2 u(x,y)}{\partial^2 x }
+\tilde{a}_{12}(x,y)\frac{\partial^2 u(x,y)}{\partial x \partial y}
+\tilde{a}_{22}(x,y)\frac{\partial^2 u(x,y)}{\partial^2 y } \\
+\tilde{b}_{1}(x,y)\frac{\partial u(x,y)}{\partial x}
+\tilde{b}_{2}(x,y)\frac{\partial u(x,y)}{\partial y}  + \lambda u(x,y) =0.
\end{multline}

Among many methods of approximating (\ref{E:5J1}), we shall discuss a linear partial $q$-difference
equation, obtained from (\ref{E:5J1}) via the simplest $q$-difference schemes of the second-order 
precision \cite{NIST,tesisjaime}:
\begin{multline}\label{EC:5J7}
a_{11}(x,y) \sqrt{q} D_{q}^{1} D_{q^{-1}}^{1} u(x,y)+ a_{22}(x,y) \sqrt{q} D_{q}^{2} D_{q^{-1}}^{2} u(x,y)\\
+{a}_{12a}(x,y) D_{q}^{1} D_{q}^{2} u(x,y)
+{a}_{12d}(x,y) D_{q^{-1}}^{1} D_{q^{-1}}^{2} u(x,y)\\
+ b_{1}(x,y) D_{q}^{1} u(x,y) + b_{2}(x,y) D_{q}^{2} u(x,y) + \lambda u(x,y)=0.
\end{multline}
It is important to note here that from the cross second partial derivative we have obtained two 
second-order $q$-difference operators. As is shown below through an example (see Section 
\ref{section:example}), the associated  polynomial coefficients $a_{12a}(x,y)$ and $a_{12d}(x,y)$ 
can be distinct.

\begin{definition}
We shall refer to
\[
u^{(k,\ell)}(x,y):=[D_{q}^{1}]^{(k)} [D_{q}^{2}]^{(\ell)} u(x,y)= D_{q}^{1} 
\stackrel{k)}{\cdots} D_{q}^{1} D_{q}^{2} \stackrel{\ell)}{\cdots} D_q^{2} u(x,y)
\]
as generalized difference of order $(k,\ell)$ for the function $u(x,y)$. 

\end{definition}

\begin{definition} We shall say that equation (\ref{EC:5J7}) is a partial $q$-difference equation of the 
hypergeometric type if all the generalized differences $u^{(k,\ell)}(x,y)$  for any solution $u=u(x,y)$ 
of (\ref{EC:5J7}) are also solutions of  equations  of the same type.
\end{definition}

In a similar way as Lyskova \cite{0765.33009} introduced the so-called basic class in the 
continuous case, we have:

\begin{lem}\label{L:5J100} Equation (\ref{EC:5J7}) is a partial $q$-difference equation of the 
hypergeometric type if and only if it has the form
\begin{multline}\label{eq:29n}
q  \left( a_{1}x^{2}+b_{1}x+c_{1} \right) D_{q}^{1} D_{q^{-1}}^{1} u(x,y)+ 
q \left( a_{2}y^{2}+b_{2}y+c_{2} \right) D_{q}^{2} D_{q^{-1}}^{2} u(x,y)\\
+\left(a_{3a} x y + b_{3a} x + c_{3a} y+ d_{3a}\right) D_{q}^{1} D_{q}^{2} u(x,y) \\
+\left( a_{3d} x y + b_{3d} x + c_{3d} y+ d_{3d} \right) D_{q^{-1}}^{1} D_{q^{-1}}^{2} u(x,y)\\
+ \left(f_{1}x+g_{1} \right) D_{q}^{1} u(x,y) + \left(f_{2}y+g_{2} \right) D_{q}^{2} u(x,y) 
+ \lambda u(x,y)=0,
\end{multline}
that is,
\begin{align*}
a_{11}(x,y)&=a_{11}(x)=\sqrt{q} \left( a_{1}x^{2}+b_{1}x+c_{1} \right), \quad a_{22}(x,y)=a_{2}(y)
=\sqrt{q} \left( a_{2}y^{2}+b_{2}y+c_{2} \right), \\
a_{12a}(x,y)&=a_{3a} x y + b_{3a} x + c_{3a} y+ d_{3a}, \quad
a_{12d}(x,y)=a_{3d} x y + b_{3d} x + c_{3d} y+ d_{3d}, \\
b_{1}(x,y)&=b_{1}(x)=f_{1}x+g_{1}, \quad b_{2}(x,y)=b_{2}(y)=f_{2}y+g_{2}.
\end{align*}
\end{lem}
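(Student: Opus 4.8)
The plan is to prove both implications by tracking the effect of the operators $D_q^1$ and $D_q^2$ on equation \eqref{EC:5J7}. The essential tool is a ``commutation'' analysis: one applies $D_q^1$ (and, symmetrically, $D_q^2$) to \eqref{EC:5J7}, pushes the operator through each term using the product rules \eqref{E:DQP1}--\eqref{E:DQP2} and the identities \eqref{EC:OPCON}, and collects the result. For this to close, one needs the key observation that $D_q^1$ applied to a monomial $x^m$ produces a scalar multiple of $x^{m-1}$ (indeed $D_q^1 x^m = [m]_q x^{m-1}$ in the obvious notation), and that under the substitution $x\mapsto qx$ a polynomial of degree $\le d$ stays a polynomial of degree $\le d$; the crucial point is that \emph{the degree in $x$ of each coefficient function is not increased}. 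The generalized difference $u^{(1,0)} = D_q^1 u$ should again satisfy an equation of the form \eqref{EC:5J7}, but with shifted/rescaled coefficients and a new spectral parameter $\lambda$ replaced by something like $\lambda + (\text{contribution from }f_1, a_1,\dots)$.

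For the ``if'' direction I would start from the explicit form \eqref{eq:29n} and verify directly that $v:=D_q^1 u$ satisfies an equation of the same shape. Apply $D_q^1$ to \eqref{eq:29n} term by term. For the pure second-order term $q(a_1 x^2 + b_1 x + c_1) D_q^1 D_{q^{-1}}^1 u$, the product rule \eqref{E:DQP1} gives a term $q(a_1 x^2+b_1 x+c_1)\big|_{x\mapsto \text{shift}}\, D_q^1 D_{q^{-1}}^1 v$ plus a first-order term coming from $D_q^1$ hitting the quadratic coefficient, whose coefficient is $D_q^1(q(a_1 x^2+b_1x+c_1))$, a polynomial of degree $\le 1$ in $x$ — exactly the admissible shape for a $b_1$-type coefficient. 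The mixed terms with $D_q^1 D_q^2$ and $D_{q^{-1}}^1 D_{q^{-1}}^2$ require \eqref{EC:OPCON} to move $D_q^1$ past $D_{q^{-1}}^1$, producing again second- and first-order contributions whose coefficients remain affine in each variable. One does the bookkeeping, checks that no term of disallowed degree (e.g. a genuinely quadratic coefficient in front of a mixed first-order operator, or a $\deg\ge 2$ coefficient in front of $D_q^1 u$) appears, and reads off that $v$ solves an equation of type \eqref{eq:29n}; by symmetry the same holds for $D_q^2 u$, and then by induction for all $u^{(k,\ell)}$.

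For the ``only if'' direction the argument is a converse forcing argument: assume \eqref{EC:5J7} is of the hypergeometric type, so in particular $D_q^1 u$ and $D_q^2 u$ solve equations of the same type \eqref{EC:5J7} for every solution $u$. Writing out what ``same type'' means and comparing the coefficient functions of $D_q^1 u$ obtained by differentiating \eqref{EC:5J7} with the requirement that they again be the coefficient functions of an equation \eqref{EC:5J7}, one gets constraints. Concretely, the coefficient $a_{11}(x,y)$ must be such that both $a_{11}$ itself and whatever is produced after one application of $D_q^1$ stay within the allowed polynomial degrees; iterating (the generalized differences of \emph{all} orders must work) forces $a_{11}$ to be a polynomial of degree $\le 2$ in $x$ and independent of $y$, and similarly for the other coefficients — this is where the affine/quadratic template in \eqref{eq:29n} is pinned down. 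One should be a little careful that the conclusion is drawn ``for every solution $u$'', so strictly one argues at the level of the operator identity (the coefficients are functions, not depending on $u$), using that the differentiated equation must be an identity in $(x,y)$.

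The main obstacle I expect is the mixed-derivative bookkeeping: because the cross term in \eqref{EC:5J7} has been split into an ``ascending'' part $a_{12a} D_q^1 D_q^2$ and a ``descending'' part $a_{12d} D_{q^{-1}}^1 D_{q^{-1}}^2$, applying $D_q^1$ and using the third and fourth identities in \eqref{EC:OPCON} to convert $D_{q^{-1}}^1$ into $D_q^1 + (1-q)x\,D_q^1 D_{q^{-1}}^1$ generates extra cross-terms that must be reabsorbed into the $a_{12a}$-type, $a_{12d}$-type, $b_1$-type, and $b_2$-type coefficients of the equation for $v$; checking that all these reabsorbed coefficients land in the correct (affine in each variable, or affine in one variable) classes, and that the $q$-shifts $x\mapsto qx$, $y\mapsto qy$ coming from the product rules do not spoil the degree counts, is the delicate computational heart of the lemma. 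Everything else — the pure $D_q^1 D_{q^{-1}}^1$ and $D_q^2 D_{q^{-1}}^2$ terms, the first-order terms, and the $\lambda u$ term — is routine once the convention $D_q^1 x^m = [m]_q x^{m-1}$ and the shift rules are in hand.
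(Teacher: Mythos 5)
Your plan is essentially the paper's own proof: apply $D_{q}^{1}$ (and symmetrically $D_{q}^{2}$) to (\ref{EC:5J7}) term by term using (\ref{E:DQP1})--(\ref{EC:OPCON}), force the structure to be preserved (which pins down the dependence and degrees of $a_{11},a_{22},a_{12a},a_{12d},b_{1},b_{2}$) and then iterate/induct to get the equations for all $u^{(k,\ell)}$, exactly as in the paper's seven partial results and coefficient recursions. One tiny bookkeeping remark: the extra piece $(1-q)\,y\,D_{q}^{1}a_{12d}(x,y)$ produced by converting $D_{q^{-1}}^{2}$ via (\ref{EC:OPCON}) is reabsorbed into the $a_{22}$-type (respectively $a_{11}$-type) coefficient rather than into the mixed or first-order ones, which is precisely where the bilinearity constraint on $a_{12d}$ comes from, but this does not affect the validity of your argument.
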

\begin{proof}
Apply the operator $D_{q}^{1}$  to (\ref{EC:5J7}) in order to reveal  that the lemma is 
simply a consequence of the following seven partial results, based on the relations 
(\ref{E:DQP1})--(\ref{EC:OPCON}):
\begin{multline*}\label{eq:28}
(1) \quad D_{q}^{1} [a_{11}(x,y) q^{1/2} D_{q}^{1} D_{q^{-1}}^{1} u(x,y)]\\
=a_{11}(x,y) q^{-1/2} D_{q}^{1} D_{q^{-1}}^{1}  u^{(1,0)}(x,y)+D_{q}^{1} a_{11}(x,y) 
q^{-1/2}  D_{q}^{1} u^{(1,0)}(x,y);
\end{multline*}
\begin{equation*}
(2) \quad D_{q}^{1} [a_{22}(x,y) q^{1/2} D_{q}^{2}D_{q^{-1}}^{2} u(x,y)]=
a_{22}(x,y) q^{1/2} D_{q}^{2} D_{q^{-1}}^{2} u^{(1,0)}(x,y),  \hspace*{3cm}
\end{equation*}
provided that $a_{22}(x,y)$ does not depend on $x$  (in order to preserve the same 
structure of the equation);
\begin{multline*}
(3) \quad D_{q}^{1} [a_{12a}(x,y) D_{q}^{1} D_{q}^{2} u(x,y)]\\
=D_{q}^{1} a_{12a} (x,y) D_{q}^{2} u^{(1,0)} (x,y)+a_{12a}(qx,y) D_{q}^{1} D_{q}^{2} u^{(1,0)}(x,y);
\end{multline*}
\begin{multline*}
(4) \quad D_{q}^{1}[a_{12d}(x,y) D_{q^{-1}}^{1} D_{q^{-1}}^{2} u(x,y)]\\
=D_{q}^{1} a_{12d}(x,y) D_{q^{-1}}^{2} u^{(1,0)}(x,y)+a_{12d}(x,y)q^{-1/2} q^{-1/2}
D_{q^{-1}}^{1} D_{q^{-1}}^{2} u^{(1,0)}(x,y);
\end{multline*}
\begin{equation*}
(5) \quad D_{q}^{1} [b_{1}(x,y) D_{q}^{1}u(x,y)]
=b_{1}(qx,y)q^{1/2} q^{-1/2}D_{q}^{1} u^{(1,0)}(x,y)+ D_{q}^{1}b_{1}(x,y) u^{(1,0)}(x,y);
\end{equation*}
\[
(6) \quad D_{q}^{1}[b_{2}(x,y) D_{q}^{2}u(x,y)]=b_{2}(x,y) D_{q}^{2} u^{(1,0)}(x,y), \hspace*{6cm}
\]
provided that $b_{2}(x,y)$ does not depend on $x$; 

and finally,
\[
(7) \quad D_{q}^{1}[\lambda u(x,y)]=\lambda u^{(1,0)}(x,y). \hspace*{9cm}
\]

Repeating this process $k$ times in $x$ and $\ell$ times in $y$,  one obtains the following partial 
$q$-difference equation for the generalized difference of order $(k,\ell)$ of the function $u(x,y)$:
\begin{multline}\label{eq:newdeq}
a^{(k,\ell)}_{11}(x) \sqrt{q} D_{q}^{1} D_{q^{-1}}^{1} u^{(k,\ell)}(x,y)+a^{(k,\ell)}_{22}(y) \sqrt{q} D_{q}^{2} D_{q^{-1}}^{2} u^{(k,\ell)}(x,y)\\
+a^{(k,\ell)}_{12a}(x,y)  D_{q}^{1}D_{q}^{2}
u^{(k,\ell)}(x,y)
+a^{(k,\ell)}_{12d}(x,y)  D_{q^{-1}}^{1} D_{q^{-1}}^{2} u^{(k,\ell)}(x,y) \\
+b^{(k,\ell)}_{1}(x)  D_{q}^{1} u^{(k,\ell)}(x,y)
+b^{(k,\ell)}_{2}(y)  D_{q}^{2} u^{(k,\ell)}(x,y)+
\mu^{(k,\ell)} u^{(k,\ell)}(x,y)=0,
\end{multline}
where 
\begin{equation*}
\begin{cases}
a^{(k+1,\ell)}_{11}(x)=q^{-1} a^{(k,\ell)}_{11}(x)\,,\quad
a^{(k+1,\ell)}_{22}(y)=a^{(k,\ell)}_{22}{(y)}+q^{-1/2} (1-q) y D_{q}^{1} a^{(k,\ell)}_{12d}(x,y)\,,\\
a^{(k+1,\ell)}_{12a}(x,y)= a^{(k,\ell)}_{12a}(qx,y)\,,\quad
a^{(k+1,\ell)}_{12d}(x,y)=q^{-1} a^{(k,\ell)}_{12d}(x,y) \,,\\
b^{(k+1,\ell)}_{1}(x)=b^{(k,\ell)}_{1}{(qx)}+q^{-1/2} D_{q}^{1} a^{(k,\ell)}_{11}(x)\,,\\
b^{(k+1,\ell)}_{2}(y)=b^{(k,\ell)}_{2}(y)+ D_{q}^{1} a^{(k,\ell)}_{12a}(x,y) + D_{q}^{1} a^{(k,\ell)}_{12d}(x,y) \,,\\
\mu^{(k+1,\ell)}=\mu^{(k,\ell)}+ D_{q}^{1} b^{(k,\ell)}_{1}(x),
\end{cases}
\end{equation*}
and
\begin{equation*}
\begin{cases}
a^{(k,\ell+1)}_{11}(x)=a^{(k,\ell)}_{11}(x)+q^{-1/2} (1-q) x D_{q}^{2} a^{(k,\ell)}_{12d}(x,y)\,,\quad
a^{(k,\ell+1)}_{22}(y)=q^{-1} a^{(k,\ell)}_{22}(y)\,,\\
a^{(k,\ell+1)}_{12a}(x,y)=a^{(k,\ell)}_{12a}(x,qy)\,,\quad
a^{(k,\ell+1)}_{12d}(x,y)=q^{-1} a^{(k,\ell)}_{12d}(x,y)\,,\\
b^{(k,\ell+1)}_{1}(x)=b^{(k,\ell)}_{1}(x)+  D_{q}^{2} a^{(k,\ell)}_{12a}(x,y) + D_{q}^{2} a^{(k,\ell)}_{12d}(x,y)  \,,\\
b^{(k,\ell+1)}_{2}(y)=
b^{(k,\ell)}_{2}(qy) +q^{-1/2} D_{q}^{2} a^{(k,\ell)}_{22}(y)\,\\
\mu^{(k,\ell+1)}=\mu^{(k,\ell)}+ D_{q}^{2} b^{(k,\ell)}_{2}(y).
\end{cases}
\end{equation*}

If one computes the action of $D_{q}^{1}D_{q}^{2}=D_{q}^{2}D_{q}^{1}$ on the equation (\ref{EC:5J7}), 
then one obtains that
\[
D_{q}^{1}D_{q}^{2} a_{12i}(x,y)=0,
\]
or equivalently, the polynomials $a_{12a}(x,y)$ and $a_{12d}(x,y)$ should not contain the terms $x^{2}$ and $y^{2}$.

It is not hard to prove by induction that 
\begin{align} \label{a11rs}
a_{11}^{(k,\ell)}(x,y)&=\frac{ a_{11}(x,y)}{q^{k}} + \frac{(1-q^{\ell})x (c_{3d}+a_{3d}x)}
{q^{k+\ell-1/2}},\quad a_{12a}^{(k,\ell)}(x,y)=a_{12a}(q^{k}x,q^{\ell}y), \\
a_{22}^{(k,\ell)}(x,y)&=\frac{ a_{22}(x,y)}{q^{\ell}} + \frac{(1-q^{k})y(b_{3d}+a_{3d}y)}{q^{k+\ell-1/2}},\quad
a_{12d}^{(k,\ell)}(x,y)=q^{-(k+\ell)} a_{12d}(x,y),
\end{align} 
\begin{align}
b_{1}^{(k,\ell)}(x,y)&=  b_{1}(q^{k}x,q^{\ell}y) +
\frac{(1-q)\qnum{k}\qnum{\ell} (c_{3d}+(a_{3d}-a_{3a}q^{k+\ell-1})x)}{q^{k+\ell-1}}, \\ & +\frac{\qnum{k} (b_{1}+a_{1}(q^{k}+1)x)}{q^{k-1}}  + \frac{\qnum{\ell}(c_{3d}+a_{3d}x+q^{\ell-1}(c_{3a}+a_{3a}x))}{q^{\ell-1}} \nonumber \\
b_{2}^{(k,\ell)}(x,y)&=b_{2}(q^{k} x,q^{\ell} y) + \frac{(1-q)\qnum{k}\qnum{\ell} (b_{3d}+(a_{3d}-a_{3a}q^{k+\ell-1})y)}{q^{k+\ell-1}} \label{b2rs} \\
& + \frac{\qnum{\ell}(b_{2}+a_{2}(q^{\ell}+1)y)}{q^{\ell-1}} +
\frac{\qnum{k}(b_{3d}+a_{3d}y+q^{k-1}(b_{3a}+a_{3a}y))}{q^{k-1}}, \nonumber \\
\mu^{(k,\ell)}&=\lambda+\frac{\qnum{k}(f_{1}q^{k-2}+a_{1}\qnum{k-1})}{q^{k-2}} +
\frac{\qnum{\ell}(f_{2}q^{\ell-2}+a_{2} \qnum{\ell-1})}{q^{\ell-2}}\\&+
\frac{\qnum{k}\qnum{\ell} (a_{3d}+a_{31}q^{k+\ell-2})}{q^{k+\ell-2}}, \nonumber
\end{align}
where the $q$-number is
\begin{equation}\label{eq:qnumber}
\qnum{z}=\frac{q^{z}-1}{q-1}, \qquad z \in {\mathbb{C}}.
\end{equation}

\end{proof}

\section{Admissible equations}\label{S:3}
\begin{definition}
The partial $q$-difference equation of the hypergeometric type (\ref{EC:5J7}) will be called {admissible} 
if there exists an infinite sequence $\{\lambda_n\}$ ($n=0,1, \dots$) such that for each $\lambda=\lambda_n$, 
there are precisely $n+1$ linearly independent  polynomial solutions of total degree $n$ and no non-trivial 
solutions in the form of polynomials, whose total degree is less than $n$.
\end{definition}
This concept was introduced by Krall and Sheffer \cite{MR0228920} in the case of second-order partial 
differential equations and also by Y. Xu in \cite[Section 2]{MR2132628} for the case of second-order partial 
difference equations (without assuming that equations are of the hypergeometric type),  and analyzed 
later on in \cite{MR2853206} and \cite{AGR2012,MR2289245,tesisjaime}, for the continuous and discrete 
cases, respectively.

In the case $n=0$, the equation (\ref{EC:5J7}) also implies that a non-trivial solution can only 
exist when $\lambda_0=0$.

Observe that this definition of admissibility of equation (\ref{EC:5J7}) implies that all numbers
\[
\lambda_0=0,\lambda_1, \lambda_2, \dots, \lambda_n, \dots,
\]
are distinct,  $\lambda_m \neq \lambda_n,\,\, m \neq n$.

From Lemma \ref{L:5J100} one can deduce

\begin{theorem}
The partial $q$-difference equation of the hypergeometric type (\ref{eq:29n}) is admissible if and only if
\begin{equation}\label{EC:AAAAAA}
f_{2}=f_{1}, \quad a_{2}=a_{1}, \quad a_{3a}=a_{1}q+f_{1}(q-1), \quad a_{3d}=a_{1},
\end{equation}
and
\begin{equation}\label{EC:AUTOVALOR}
\lambda_{n}=-\qnum{n}\,\left(f_{1}-a_{1} q \qnum{1-n} \right),
\end{equation}
and the numbers $a_{1}$ and $f_{1}$ are such that for any non-negative integer $m$ the 
following condition holds
\[
f_{1}-a_{1} q \qnum{1-m} \neq 0.
\]
\end{theorem}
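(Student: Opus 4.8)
The plan is to determine the admissibility condition by examining the action of equation (\ref{eq:29n}) on monomials $x^{i}y^{j}$, following the standard Krall--Sheffer strategy adapted to the $q$-linear setting. First I would fix a total degree $n$ and, for each basis monomial $x^{i}y^{j}$ with $i+j=n$, compute the image under the operator $L$ defined by the left-hand side of (\ref{eq:29n}) minus $\lambda u$. Since each of the six $q$-difference operators acts diagonally on monomials (for instance $D_{q}^{1}D_{q^{-1}}^{1}x^{i}y^{j}=\qnum{i}\qnum{i-1}\,q^{-i+1}x^{i-2}y^{j}$ up to the appropriate power of $q$, and similarly for the others), the polynomial $L(x^{i}y^{j})$ is a combination of monomials of total degree $\le n$. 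The key observation is that $L$ maps the space $\mathcal{P}_{n}$ of polynomials of total degree $\le n$ into itself, and in the monomial basis ordered by degree it is block-triangular; the diagonal block acting on the top-degree part $x^{i}y^{j}$, $i+j=n$, is itself a matrix whose entries I would read off from the coefficients $a_{1},a_{2},a_{3a},a_{3d},f_{1},f_{2}$.

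Next I would impose the two halves of the admissibility definition. The requirement that there be \emph{no} nontrivial polynomial solution of degree $<n$ forces the restriction of $L-\lambda$ to $\mathcal{P}_{n-1}$ to be invertible for the relevant $\lambda$, while the requirement of exactly $n+1$ independent solutions of degree exactly $n$ forces the top-degree block of $L$ (which is $\lambda$-independent because lower-degree terms cannot feed back up) to be a scalar matrix $\lambda_{n}I_{n+1}$. Writing out that top-degree block explicitly: the diagonal entry coming from $x^{i}y^{j}$ involves $f_{1}\qnum{i}+f_{2}\qnum{j}$ together with the quadratic pieces $a_{1}\qnum{i}\qnum{i-1}q^{\cdots}$, $a_{2}\qnum{j}\qnum{j-1}q^{\cdots}$ and the mixed-term contribution from $a_{3a},a_{3d}$ proportional to $\qnum{i}\qnum{j}$; the off-diagonal entries (which shift $(i,j)$ to $(i+1,j-1)$ or $(i-1,j+1)$ within the same total degree) come solely from the $xy$-coefficient mismatch in $a_{12a}$ and $a_{12d}$ versus $a_{1}$, and from the mismatch $f_{1}\ne f_{2}$ or $a_{1}\ne a_{2}$. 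Setting all off-diagonal entries to zero yields $f_{2}=f_{1}$, $a_{2}=a_{1}$, and the two relations on $a_{3a},a_{3d}$; setting the resulting common diagonal value equal to a single $\lambda_{n}$ independent of $(i,j)$ with $i+j=n$ produces (\ref{EC:AUTOVALOR}), and one checks the formula collapses using $\qnum{i}+q^{i}\qnum{j}=\qnum{n}$ and $\qnum{i}\qnum{i-1}+\text{mixed}+\qnum{j}\qnum{j-1}=\qnum{n}\qnum{n-1}$-type identities for $q$-numbers.

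Conversely, assuming (\ref{EC:AAAAAA}) holds, I would verify that $L$ is genuinely triangular with diagonal value $\lambda_{n}$ on \emph{every} degree-$n$ monomial, so $(L-\lambda_{n})$ annihilates an $(n+1)$-dimensional space modulo lower degree and, since $\lambda_{m}\ne\lambda_{n}$ for $m<n$ under the stated nonvanishing hypothesis on $f_{1}-a_{1}q\qnum{1-m}$, the map $(L-\lambda_{n})$ is invertible on each lower-degree homogeneous layer; a downward induction on degree then lifts each top-degree solution to a genuine polynomial eigenfunction of degree exactly $n$ and shows no solutions of smaller degree exist. The distinctness $\lambda_{m}\ne\lambda_{n}$ follows by factoring $\lambda_{n}-\lambda_{m}$ as a product involving $\qnum{n-m}$ and $f_{1}-a_{1}q\qnum{1-n-m+\cdots}$, which is exactly where the hypothesis $f_{1}-a_{1}q\qnum{1-m}\ne0$ for all $m\ge0$ is used.

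The main obstacle I anticipate is purely computational bookkeeping: correctly tracking the various fractional powers of $q$ attached to the six operators (the $\sqrt q$ factors and the $q^{-1/2}$ shifts visible in the proof of Lemma \ref{L:5J100}), so that the off-diagonal entries of the top-degree block really do vanish precisely under (\ref{EC:AAAAAA}) and not under some $q$-shifted variant. A secondary subtlety is confirming that the top-degree block is truly $\lambda$-independent — i.e. that the quadratic coefficients $a_{1}x^{2}$, $a_{2}y^{2}$ never raise total degree — which is immediate here since $D_{q}^{1}D_{q^{-1}}^{1}$ lowers the $x$-degree by $2$, exactly compensating the $x^{2}$; once that is in hand the triangularity argument is routine. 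I would organize the write-up so that the identity $\lambda_{n}=-\qnum{n}(f_{1}-a_{1}q\qnum{1-n})$ emerges as the common diagonal entry, making both directions of the equivalence fall out of the same computation.
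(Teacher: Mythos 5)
Your overall strategy is exactly the one the paper points to: the authors give no detailed argument, citing instead Suetin's continuous-case proof, which is precisely this Krall--Sheffer leading-term analysis, and your key identities do check out (e.g. $-a_1q\qnum{1-n}=a_1q^{2-n}\qnum{n-1}$, so the common diagonal value is $-\bigl(f_1\qnum{n}+a_1q^{2-n}\qnum{n}\qnum{n-1}\bigr)=\lambda_n$, and $\lambda_n-\lambda_m=-q^{m}\qnum{n-m}\bigl(f_1-a_1q\qnum{1-(n+m)}\bigr)$, which is where the nonvanishing hypothesis enters). However, one step of your derivation as described would fail: the top-degree block of the operator is \emph{automatically diagonal}, with no off-diagonal entries whatsoever, regardless of whether $f_1=f_2$ or $a_1=a_2$. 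Because of the structure forced by Lemma \ref{L:5J100} ($a_{11}$ depends only on $x$, $a_{22}$ only on $y$, $b_1$ only on $x$, $b_2$ only on $y$, and $a_{12a},a_{12d}$ contain no $x^2$ or $y^2$ terms), every term of (\ref{eq:29n}) sends $x^iy^j$ to a multiple of $x^iy^j$ plus terms of strictly lower total degree; nothing couples $x^iy^j$ to $x^{i\pm1}y^{j\mp1}$. So ``setting the off-diagonal entries to zero'' is vacuous and yields none of (\ref{EC:AAAAAA}). All four conditions must instead come from the step you list second: demanding that the diagonal entry
\begin{equation*}
d_{i,j}=a_1q^{2-i}\qnum{i}\qnum{i-1}+a_2q^{2-j}\qnum{j}\qnum{j-1}
+\bigl(a_{3a}+q^{2-n}a_{3d}\bigr)\qnum{i}\qnum{j}+f_1\qnum{i}+f_2\qnum{j}
\end{equation*}
be independent of the split $(i,j)$ with $i+j=n$, for every $n$; the common value is then $-\lambda_n$ (e.g. for $n=2$ one gets $(1+q)(a_1+f_1)$ from both $(2,0)$ and $(1,1)$ under (\ref{EC:AAAAAA})). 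Since you do impose this constancy as well, your computation would still land on the theorem, but the write-up must relocate the source of (\ref{EC:AAAAAA}); note in particular that $a_{3a}=a_1q+f_1(q-1)$ mixes first- and second-order coefficients and visibly cannot arise from any ``off-diagonal'' consideration.

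Two smaller points to make explicit. In the necessity direction you need the standard linear-algebra step that the $n+1$ linearly independent degree-$n$ solutions have linearly independent leading homogeneous forms --- otherwise some combination would be a nonzero solution of total degree $<n$, contradicting admissibility --- and only then does the (diagonal) leading block have to equal the scalar $-\lambda_n I_{n+1}$. And do fix the composition convention for $D_{q}^{1}D_{q^{-1}}^{1}$ (apply $D_{q^{-1}}^{1}$ first): the alternative order shifts the power $q^{2-i}$ to $q^{3-i}$ and would produce an eigenvalue inconsistent with (\ref{EC:AUTOVALOR}); this is exactly the kind of $q$-power bookkeeping you flag as the main hazard.
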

\begin{proof}
A proof can be given in a similar way as in \cite[pp. 93--97]{MR1717891} for the multivariate 
continuous situation.
\end{proof}

It is therefore plain that with the notations of Lemma \ref{L:5J100}  the equation  (\ref{eq:29n}) 
can be written as
\begin{multline}\label{eq:29nn}
q  \left( a_{1}x^{2}+b_{1}x+c_{1} \right) D_{q}^{1} D_{q^{-1}}^{1} u(x,y)+ 
q \left( a_{1}y^{2}+b_{2}y+c_{2} \right) D_{q}^{2} D_{q^{-1}}^{2} u(x,y)\\
+\left((a_{1}q+f_{1}(q-1)) x y + b_{3a} x + c_{3a} y+ d_{3a}\right) D_{q}^{1} D_{q}^{2} u(x,y) \\
+\left( a_{1} x y + b_{3d} x + c_{3d} y+ d_{3d} \right) D_{q^{-1}}^{1} D_{q^{-1}}^{2} u(x,y)\\
+ \left(f_{1}x+g_{1} \right) D_{q}^{1} u(x,y) + \left(f_{1}y+g_{2} \right) D_{q}^{2} u(x,y) 
+ \lambda_{n} u(x,y)=0,
\end{multline}
i.e. 
\begin{equation}\label{sigmas}
\begin{cases}
a_{11}(x)=\sqrt{q} \left( a_{1}x^{2}+b_{1}x+c_{1} \right),  \quad b_{1}(x)=f_{1}x+g_{1}, \quad
b_{2}(y)=f_{1}y+g_{2}, \\
a_{12a}(x,y)=(a_{1}q+f_{1}(q-1)) x y + b_{3a} x + c_{3a} y+ d_{3a}, \\
a_{22}(y)=\sqrt{q} \left( a_{1}y^{2}+b_{2}y+c_{2} \right), \quad
a_{12d}(x,y)=a_{1} x y + b_{3d} x + c_{3d} y+ d_{3d}.
\end{cases}
\end{equation}

\section{Potentially self-adjoint operator}\label{S:4}

From the admissible linear second-order partial $q$-difference equation of the hypergeo\-metric 
type  (\ref{eq:29nn})  we introduce the following second-order partial $q$-difference operator:
\begin{multline}\label{EC:h1}
\mathcal{D}_{q}[f(x,y)]=
a_{11}(x)\sqrt{q} D_{q}^{1}D_{q^{-1}}^{1}{f}(x,y)+a_{22}(y)\sqrt{q} D_{q}^{2}D_{q^{-1}}^{2}{f}(x,y)\\
+a_{12a}(x,y)D_{q}^{1}D_{q}^{2}{f}(x,y)+a_{12d}(x,y)D_{q^{-1}}^{1}D_{q^{-1}}^{2}{f}(x,y) \\
+ b_{1}(x)D_{q}^{1}{f}(x,y) + b_{2}(y) D_{q}^{2}{f}(x,y).
\end{multline}
This enables us to write  (\ref{eq:29nn}) as
\begin{equation}\label{EC:h2}
\mathcal{D}_{q}f(x,y)+ \lambda_{n} f(x,y) =0.
\end{equation}

\begin{lem}
The adjoint operator $\mathcal{D}_{q}^{\dagger}$ of $\mathcal{D}_{q}$, defined by (\ref{EC:h1}), is given by
\begin{multline}\label{adj222}
\mathcal{D}_{q}^{\dagger}[f(x,y)]=
\sqrt{q} D_{q}^{1}D_{q^{-1}}^{1}(a_{11}(x){f}(x,y))+\sqrt{q} D_{q}^{2}D_{q^{-1}}^{2}(a_{22}(y){f}(x,y))\\
q^{2}D_{q}^{1}D_{q}^{2}(a_{12d}(x,y){f}(x,y))+\frac{1}{q^2}D_{q^{-1}}^{1}D_{q^{-1}}^{2}(a_{12a}(x,y){f}(x,y)) \\
- \frac{1}{q} D_{q^{-1}}^{1}(b_{1}(x){f}(x,y)) -\frac{1}{q}D_{q^{-1}}^{2}(b_{2}(y){f}(x,y)).
\end{multline}
\end{lem}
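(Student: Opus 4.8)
The plan is to verify that \eqref{adj222} is the formal adjoint of the operator $\mathcal{D}_q$ in \eqref{EC:h1} with respect to the double Jackson $q$-integral pairing $\langle f,g\rangle=\iint f(x,y)g(x,y)\,d_qx\,d_qy$ taken over the $q$-linear lattice \eqref{independent} on which one intends to work, so that $\langle \mathcal{D}_q f,g\rangle=\langle f,\mathcal{D}_q^{\dagger}g\rangle$ for all $f,g$ for which the endpoint contributions cancel. First I would record the two one-dimensional $q$-summation-by-parts rules, obtained by reindexing the geometric sum that defines the Jackson integral: up to boundary terms,
\[
\iint (D_{q}^{1}f)\,g\,d_qx\,d_qy=-\frac{1}{q}\iint f\,(D_{q^{-1}}^{1}g)\,d_qx\,d_qy,\qquad
\iint (D_{q^{-1}}^{1}f)\,g\,d_qx\,d_qy=-q\iint f\,(D_{q}^{1}g)\,d_qx\,d_qy,
\]
together with the same identities carrying the superscript $2$ and the roles of $x$ and $y$ interchanged. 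In operator language these read $(D_{q}^{1})^{\dagger}=-q^{-1}D_{q^{-1}}^{1}$, $(D_{q^{-1}}^{1})^{\dagger}=-q\,D_{q}^{1}$ (and likewise in $y$); combined with the facts that multiplication by any function is self-adjoint, that $(AB)^{\dagger}=B^{\dagger}A^{\dagger}$, and that the relations \eqref{EC:OPCON} let one transpose $x$- and $y$-factors one variable at a time, this is all the machinery needed.

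Next I would apply these rules to each of the six summands of \eqref{EC:h1}, each of the form $\phi\cdot L$ with $\phi\in\{\sqrt q\,a_{11},\,\sqrt q\,a_{22},\,a_{12a},\,a_{12d},\,b_{1},\,b_{2}\}$ and $L$ a product of first-order operators; its contribution to $\mathcal{D}_q^{\dagger}g$ is $L^{\dagger}(\phi\,g)$. Computing the second-order transposes from the first-order rules gives $(D_{q}^{1}D_{q^{-1}}^{1})^{\dagger}=D_{q}^{1}D_{q^{-1}}^{1}$ and $(D_{q}^{2}D_{q^{-1}}^{2})^{\dagger}=D_{q}^{2}D_{q^{-1}}^{2}$ (the factors $-q$ and $-q^{-1}$ cancel), $(D_{q}^{1}D_{q}^{2})^{\dagger}=q^{-2}D_{q^{-1}}^{1}D_{q^{-1}}^{2}$, $(D_{q^{-1}}^{1}D_{q^{-1}}^{2})^{\dagger}=q^{2}D_{q}^{1}D_{q}^{2}$ (using \eqref{EC:OPCON} to reorder the two mixed factors), while $(D_{q}^{1})^{\dagger}=-q^{-1}D_{q^{-1}}^{1}$ and $(D_{q}^{2})^{\dagger}=-q^{-1}D_{q^{-1}}^{2}$. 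Substituting and collecting, the $a_{11}$- and $a_{22}$-terms reproduce the first two summands of \eqref{adj222}, the $a_{12a}$-term becomes the $q^{-2}D_{q^{-1}}^{1}D_{q^{-1}}^{2}(a_{12a}\,\cdot\,)$ summand, the $a_{12d}$-term becomes the $q^{2}D_{q}^{1}D_{q}^{2}(a_{12d}\,\cdot\,)$ summand, and the $b_{1}$- and $b_{2}$-terms give $-q^{-1}D_{q^{-1}}^{1}(b_{1}\,\cdot\,)$ and $-q^{-1}D_{q^{-1}}^{2}(b_{2}\,\cdot\,)$; this is precisely the right-hand side of \eqref{adj222}.

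The main obstacle is the careful bookkeeping of the powers of $q$ and of the boundary terms. Each time a first-order operator is moved past a multiplication operator, both an argument shift ($\phi(x,y)\mapsto\phi(qx,y)$ or $\phi(x/q,y)$, compare the product rules \eqref{E:DQP1}--\eqref{E:DQP2}) and an explicit factor $q^{\pm1}$ are generated, and it is exactly the net exponent $q^{\pm2}$ on the two mixed terms that must come out right. The endpoint contributions produced by the Jackson summation also have to be disposed of; the cleanest way is to phrase the whole computation purely algebraically, by establishing the Lagrange-type identity $g\,\mathcal{D}_q[f]-f\,\mathcal{D}_q^{\dagger}[g]=D_{q}^{1}[\,\cdots\,]+D_{q}^{2}[\,\cdots\,]$, which exhibits the bilinear concomitant as a total $q$-difference; from it the adjointness over any admissible domain follows at once, and it is in any case the identity needed to bring \eqref{EC:h2} to self-adjoint form in the sequel.
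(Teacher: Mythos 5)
Your proposal is correct and follows essentially the same route as the paper: the paper's proof simply invokes the first-order adjoint rules $[D^{i}_{q}]^{\dagger}=-q^{-1}D^{i}_{q^{-1}}$ and $[D^{i}_{q^{-1}}]^{\dagger}=-q\,D^{i}_{q}$ and applies them term by term to (\ref{EC:h1}), exactly as you do (your $q$-summation-by-parts derivation of these rules and the remark on boundary terms are just added detail). Your bookkeeping of the factors, giving $q^{2}$ on the $a_{12d}$ term, $q^{-2}$ on the $a_{12a}$ term, and $-q^{-1}$ on the first-order terms, matches (\ref{adj222}).
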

\begin{proof}
The result is a direct consequence of
\[
\left[D^{i}_{q} \right]^{\dagger}=-\frac{1}{q}D^{i}_{q^{-1}}, \quad 
\left[D^{i}_{q^{-1}} \right]^{\dagger}=-q D^{i}_{q}, \quad i=1,2.
\]
\end{proof}

\begin{definition}\label{def:psa}
The operator ${\mathcal{D}}_{q}$ is potentially self-adjoint in a domain $G$, if there exists a 
positive real function $\varrho(\textbf{x}) = \varrho(x,y)$  in this domain, such that the operator 
$\varrho(\textbf{x}) {\mathcal{D}}_{q}$ is self-adjoint in the domain $G$, i.e., $(\varrho(\textbf{x}) {\mathcal{D}}_{q})^{\dag}=\varrho(\textbf{x}) {\mathcal{D}}_{q}$ (see \cite[Chapter V]{MR1717891}).
\end{definition}
In order that ${\mathcal{D}}_{q}$ be potentially self-adjoint, we multiply (\ref{eq:29nn}) through 
by a positive function $\varrho(\textbf{x})=\varrho(x,y)$ in some domain $G$, to be chosen later, 
to arrive at 
\begin{multline}\label{SA:11}
a_{11}(x)\varrho(x,y) \sqrt{q} D_{q}^{1}D_{q^{-1}}^{1}{f}(x,y)+a_{22}(y)\varrho(x,y) 
\sqrt{q} D_{q}^{2}D_{q^{-1}}^{2}{f}(x,y)\\
+a_{12a}(x,y)\varrho(x,y) D_{q}^{1}D_{q}^{2}{f}(x,y)+a_{12d}(x,y)\varrho(x,y) 
D_{q^{-1}}^{1}D_{q^{-1}}^{2}{f}(x,y) \\
+ b_{1}(x)\varrho(x,y) D_{q}^{1}{f}(x,y) +b_{2}(y)\varrho(x,y)D_{q}^{2}{f}(x,y)
+\lambda\varrho(x,y)f(x,y)=0,
\end{multline}
which can be written in the self-adjoint form if the following $q$-Pearson's system of equations 
is satisfied:
\begin{equation}\label{eq:pearson}
\begin{cases}
\varrho(x,y)a_{12a}(x,y)=q^{2}\varrho(qx,qy)a_{12d}(qx,qy), \\
\varrho(x,y)\phi_{1}(x,y)=\varrho(qx,y)\omega_{1}(qx,y),\\
\varrho(x,y)\phi_{2}(x,y)=\varrho(x,qy)\omega_{2}(x,qy),
\end{cases}
\end{equation}
where
\begin{equation}\label{eq:46}
\begin{cases}
\omega_{1}(qx,y)=\sqrt{q}ya_{11}(qx)-xq^{2}a_{12d}(qx,y),\\
\omega_{2}(x,qy)=\sqrt{q}xa_{22}(qy)-yq^{2}a_{12d}(x,qy),\\
\phi_{1}(x,y)=\sqrt{q}y a_{11}(x)-xa_{12a}(x,y)+(q-1)xyb_1(x),\\
\phi_{2}(x,y)=\sqrt{q}xa_{22}(y)-ya_{12a}(x,y)+(q-1)xyb_2(y).
\end{cases}
\end{equation}

The $q$-Pearson's system (\ref{eq:pearson}) can be also written as
\begin{equation}\label{EC:5J780}
\begin{cases}
\sqrt{q}D_{q}^{1}(\varrho(x,y) a_{11}(x))+q^{-1}D_{q^{-1}}^{2}(\varrho(x,y) a_{12a}(x,y))
=\varrho(x,y) b_1(x,y),\\
\sqrt{q}D_{q}^{2}(\varrho(x,y) a_{22}(y))+q^{-1}D_{q^{-1}}^{1}(\varrho(x,y) a_{12a}(x,y))
=\varrho(x,y) b_2(x,y),\\
q^{4}D_{q}^{1}D_{q}^{2}(\varrho(x,y) a_{12d}(x,y))
=D_{q^{-1}}^{1}D_{q^{-1}}^{2}(\varrho(x,y) a_{12a}(x,y)),
\end{cases}
\end{equation}
or equivalently,
\begin{equation}\label{EC:5J790}
\begin{cases}
D_{q}^{1}(\omega_{1}(x,y)\varrho(x,y))=\frac{1}{q}D_{q^{-1}}^{1}(\phi_{1}(x,y)\varrho(x,y)),\\
D_{q}^{2}(\omega_{2}(x,y) \varrho(x,y))=\frac{1}{q}D_{q^{-1}}^{2}(\phi_{2}(x,y)\varrho(x,y)),\\
q^{4}D_{q}^{1}D_{q}^{2}(\varrho(x,y)a_{12d}(x,y))=D_{q^{-1}}^{1}D_{q^{-1}}^{2}(\varrho(x,y)a_{12a}(x,y)).
\end{cases}
\end{equation}

\subsection{Computation of the weight function}\label{calculopeso}
By introducing the functions
\begin{equation}\label{g1g2b}
\mathcal{G}_{1}(x,y)=\frac{\phi_{1}(x,y)}{\omega_{1}(qx,y)}, \qquad \mathcal{G}_{2}(x,y)
=\frac{\phi_{2}(x,y)}{\omega_{2}(x,qy)},
\end{equation}
where $\phi_{j}(x,y)$ and $\omega_{j}(x,y)$ are defined in (\ref{eq:46}), $j=1,2$, and using 
the $q$-Pearson's system (\ref{eq:pearson}),  one obtains that
\begin{align}
&\varrho(qx,y)=\mathcal{G}_{1}(x,y)\varrho(x,y),\quad \varrho(x,qy)=\mathcal{G}_{2}(x,y)\varrho(x,y) \label{g1g2}, \\
& q^{2}\mathcal{G}_{1}(x,y) \mathcal{G}_{2}(qx,y) a_{12d}(qx,qy)= a_{12a}(x,y) = q^{2} \mathcal{G}_{1}(x,qy)\mathcal{G}_{2}(x,y)a_{12d}(qx,qy),  \label{eq:nova3} \\
&y \mathcal{G}_{2}(x,y)D^{2}_{q}(\mathcal{G}_{1}(x,y))=x\mathcal{G}_{1}(x,y)D^{1}_{q}(\mathcal{G}_{2}(x,y)). \label{eq:key}
\end{align}
From (\ref{g1g2}) it follows then that 
\[
\frac{\ln \left[\varrho(qx,y) \right]-\ln \left[\varrho(x,y) \right]}{(q-1)x}
=\frac{\ln \left[\mathcal{G}_{1}(x,y) \right]}{(q-1)x}\,,
\]
or, equivalently,
\[
D^{1}_{q}\left[\ln(\varrho(x,y)) \right]=\ln \left[(\mathcal{G}_{1}(x,y))^{{1}/{((q-1)x)}} \right],
\]
and therefore
\begin{equation}\label{faltaba1}
D^{1}_{q}\left[\ln(\varrho(x,y)) \right] - D_{q}^{1}  \left[\ln\varrho(x,y_{0}) \right]=\frac{1}{(q-1)x}
\ln \left[\frac{\mathcal{G}_{1}(x,y) }{\mathcal{G}_{1}(x,y_{0}) }\right].
\end{equation}

Upon using the $q$-integral due to J. Thomae \cite{Thomae1969} and F.H. Jackson \cite{JACKSON1910}  (see also \cite{MR2128719,MR2191786,MR2656096}), this yields
\[
\int^{x}_{x_{0}}D^{1}_{q}\left[ \ln(\varrho(s,y)) \right]d_{q}s=\ln \left[\varrho(x,y) 
\right]-\ln\left[\varrho(x_{0},y) \right],
\]
and
\begin{multline}\label{faltaba3}
\ln \left[\varrho(x,y) \right]-\ln\left[\varrho(x_{0},y) \right]=\int^{x}_{x_{0}} \ln \left[(\mathcal{G}_{1}(s,y))^{{1}/{((q-1)s)}}\right] d_{q}s\\
=(1-q)x\sum^{\infty}_{j=0}q^{j}\ln \left[(\mathcal{G}_{1}(q^{j}x,y))^{{1}/{((q-1)q^{j}x)}} \right]
-(1-q)x_{0}\sum^{\infty}_{j=0}q^{j}\ln \left[(\mathcal{G}_{1}(q^{j}x_{0},y))^{{1}/{((q-1)q^{j}x_{0})}} \right]\\
=\sum^{\infty}_{j=0}\ln \left[\frac{\mathcal{G}_{1}(q^{j}x_{0},y)}{\mathcal{G}_{1}(q^{j}x,y)}\right]+c_{1}(y).
\end{multline}
In a similar way, one has
\begin{equation}\label{faltaba2}
\ln \left[\varrho(x,y) \right]-\ln \left[\varrho(x,y_{0}) \right]=\sum^{\infty}_{j=0}\ln\left[\frac{\mathcal{G}_{2}(x,q^{j}y_{0})}{\mathcal{G}_{2}(x,q^{j}y)} \right]+c_{2}(x).
\end{equation}

From (\ref{eq:key}) we deduce  that
\[
\frac{(q-1)xD^{1}_{q}(\mathcal{G}_{2}(x,yq^{j}))}{\mathcal{G}_{2}(x,yq^{j})}
=\frac{(q-1)y q^{j}D^{2}_{q}(\mathcal{G}_{1}(x,yq^{j}))}{\mathcal{G}_{1}(x,yq^{j})}.
\]
Upon using
\[
(q-1)xD^{1}_{q} \left(\ln \vert f \vert \right)=\ln \left[\frac{(q-1)xD^{1}_{q}f(x,y)}{f(x,y)}+1 \right],
\]
and then applying the  operator $D_{q}^{1}$ to (\ref{faltaba2}), from (\ref{faltaba1}) one obtains that 
\begin{multline*}
\ln \left[\frac{\mathcal{G}_{1}(x,y) }{\mathcal{G}_{1}(x,y_{0}) }\right]
\\=\sum^{\infty}_{j=0}(q-1)xD^{1}_{q}[\ln(\mathcal{G}_{2}(x,q^{j}y_{0}))]-(q-1)xD^{1}_{q}[\ln(\mathcal{G}_{2}(x,q^{j}y))]+(q-1)xD^{1}_{q}(c_{2}(x))\\
=\sum^{\infty}_{j=0}\ln[\frac{(q-1)xD^{1}_{q}\mathcal{G}_{2}(x,q^{j}y_{0})}{\mathcal{G}_{2}(x,q^{j}y_{0})}+1]-\ln[\frac{(q-1)xD^{1}_{q}\mathcal{G}_{2}(x,q^{j}y)}{\mathcal{G}_{2}(x,q^{j}y)}+1]+(q-1)xD^{1}_{q}(c_{2}(x))\\
=\sum^{\infty}_{j=0}\ln[\frac{(q-1)y_{0}q^{j}D^{2}_{q}\mathcal{G}_{1}(x,q^{j}y_{0})}{\mathcal{G}_{1}(x,q^{j}y_{0})}+1]-\ln[\frac{(q-1)yq^{j}D^{2}_{q}\mathcal{G}_{1}(x,q^{j}y)}{\mathcal{G}_{1}(x,q^{j}y)}+1]
+(q-1)xD^{1}_{q}(c_{2}(x))\\
=\sum^{\infty}_{j=0}(q-1)y_{0}q^{j}D^{2}_{q}(\ln(\mathcal{G}_{1}(x,q^{j}y_{0})))-(q-1)yq^{j}D^{2}_{q}
(\ln(\mathcal{G}_{1}(x,q^{j}y)))+(q-1)xD^{1}_{q}(c_{2}(x))\\
=\int^{y}_{y_{0}}D^{2}_{q}(\ln(\mathcal{G}_{1}(x,t)))d_{q}t+(q-1)xD^{1}_{q}(c_{2}(x))\\
=\ln(\mathcal{G}_{1}(x,y))-\ln(\mathcal{G}_{1}(x,y_{0}))+(q-1)xD^{1}_{q}(c_{2}(x)),
\end{multline*}
and therefore
\[
(q-1)xD^{1}_{q}(c_{2}(x))=0,
\]
which implies that $c_{2}(x)=c_{3}$ is a constant. In a similar way one verifies that $c_{1}(y)=c_{4}$ 
is also constant.

Substituting in (\ref{faltaba3}) and (\ref{faltaba2}),
\[
\ln \left[\varrho(x,y) \right]-\ln\left[\varrho(x_{0},y) \right]
=\sum^{\infty}_{j=0}\ln \left[\frac{\mathcal{G}_{1}(q^{j}x_{0},y)}{\mathcal{G}_{1}(q^{j}x,y)}\right]+c_{4},
\]
\[
\ln \left[\varrho(x,y) \right]-\ln \left[\varrho(x,y_{0}) \right]=\sum^{\infty}_{j=0}
\ln\left[\frac{\mathcal{G}_{2}(x,q^{j}y_{0})}{\mathcal{G}_{2}(x,q^{j}y)} \right]+c_{3},
\]
we finally obtain (up to a multiplicative constant) the explicit expression for the weight function 
solution of the $q$-Pearson's  system of equations (\ref{eq:pearson}),
\begin{equation}\label{eq:weight}
\varrho(x,y)=\prod^{\infty}_{j=0} 
\frac{\mathcal{G}_{1}(q^{j}x_{0},y)\,\mathcal{G}_{2}(x_{0},q^{j}y_{0})}
{\mathcal{G}_{1}(q^{j}x,y)\,\mathcal{G}_{2}(x_{0},q^{j}y)},
\end{equation}
where $\mathcal{G}_{1}(x,y)$ and $\mathcal{G}_{2}(x,y)$ are defined in (\ref{g1g2b}).

In a similar way  one can obtain the following representation for the orthogonality weight
function, associated with the $q$-derivatives of any order: 
\begin{equation}\label{eq:rhokl}
\varrho^{(k,\ell)}(x,y)=
\prod^{\infty}_{j=0} 
\frac{\mathcal{G}_{1}^{(k,\ell)}(q^{j}x_{0},y)\,\mathcal{G}_{2}^{(k,\ell)}(x_{0},q^{j}y_{0})}
{\mathcal{G}_{1}^{(k,\ell)}(q^{j}x,y)\,\mathcal{G}_{2}^{(k,\ell)}(x_{0},q^{j}y)}.
\end{equation}
Here ${\mathcal{G}}^{(k,\ell)}_{1}(x,y)$ and ${\mathcal{G}}^{(k,\ell)}_{2}(x,y)$ are defined by 
inserting into (\ref{g1g2b}) the polynomial coefficients $a_{11}^{(k,\ell)}(x,y)$, $a_{22}^{(k,\ell)}(x,y)$, 
$a_{12a}^{(k,\ell)}(x,y)$, $a_{12d}^{(k,\ell)}(x,y)$, $b_{1}^{(k,\ell)}(x,y)$, and $b_{2}^{(k,\ell)}(x,y)$,  
introduced in Section \ref{SEC:2} and given explicitly in terms of the coefficients of the initial equation 
(\ref{eq:29nn}) in (\ref{a11rs})--(\ref{b2rs}). It is important to note here that, for example, $\varrho^{(1,1)}(x,y)$ 
can be computed in two ways: as the $D_{q}^{1}$ derivative of the $D_{q}^{2}$ derivative or vice versa. 
The following relation ensures that one arrives at the same result:
\begin{equation}\label{eq:coupling}
\omega^{(k,\ell+1)}_{1}(qx,y)\omega^{(k,\ell)}_{2}(qx,qy)=\omega^{(k+1,\ell)}_{2}(x,qy)
\omega^{(k,\ell)}_{1}(qx,qy), \qquad k,\ell \geq 0.
\end{equation}
We shall refer to the latter equation as the coupling hypergeometric condition, analogous 
to \cite[Eq. (50)]{MR2289245}.

\section{Rodrigues' formula}\label{S:5}

Rodrigues' formula for classical orthogonal polynomials in one variable is an important tool for 
analyzing the fundamental properties of these polynomials \cite{MR1561893,MR0259197,MR1149380}. 
The great advantage of the Rodrigues' formula is its form as $n$th derivative of the ortho\-gonality
weight function. In \cite{MR1717891}, an analogue of the Rodrigues' formula for orthogonal polynomials 
over a domain in two variables, which are solutions of admissible and potentially self-adjoint equations, is 
presented. Kwon et al. \cite{MR1851313} succeeded in deriving a (functional) Rodrigues-type formula 
for multivariable orthogonal polynomial solutions of a second-order partial differential equation. 
In recent papers appropriate Rodrigues' formulae for polynomials solutions of second-order admissible, 
hypergeometric and potentially self-adjoint partial differential and difference equations have been 
presented \cite{MR2853206,MR2289245}.

By using the results of the previous sections in a similar vein as was elaborated by Suetin 
\cite[Theorem 3, p. 151]{MR1717891} for the continuous case, it is not hard to arrive at 
an explicit expression for a polynomial solution of an admissible potentially self-adjoint 
second-order partial $q$-difference equation of the hypergeometric type (\ref{eq:29nn}). 
The expression
\begin{multline}\label{eq:rodrigues}
\tilde{P}_{n,m}(x,y)=\frac{\Lambda_{n,m}}{\varrho(x,y)} [D_{q^{-1}}^{1}]^{(n)} [D_{q^{-1}}^{2}]^{(m)} 
\left[ \varrho^{(n,m)}(x,y) \right]
\\ =\frac{q^{n(1-n)/2+m(1-m)/2}\Lambda_{n,m}}{\varrho(x,y)} [D_{q}^{1}]^{(n)} [D_{q}^{2}]^{(m)} 
\left[ \varrho^{(n,m)}(q^{-n}x,q^{-m}y) \right] \\
=\frac{q^{n(1-n)/2+m(1-m)/2}\Lambda_{n,m}}{\varrho(x,y)} [D_{q}^{1}]^{(n)} [D_{q}^{2}]^{(m)} 
\left[ \varrho(x,y) \prod_{k=0}^{n-1} \omega_{1}(q^{-k} x,y) \prod_{s=0}^{m-1} \omega_{2}(x,q^{-s}y) \right],
\end{multline}
defines an algebraic polynomial of total degree $n + m$ in the variables $x$ and $y$, called 
Rod\-rigues' formula for the bivariate $q$-orthogonal polynomials $\tilde{P}_{n,m}(x, y)$, 
that are solutions of (\ref{eq:29nn}). In (\ref{eq:rodrigues}) the $\Lambda_{n,m}$ are 
normalizing constants, $\varrho(x,y)$ and $\varrho^{(n,m)}(x,y)$ are defined by 
(\ref{eq:weight}) and (\ref{eq:rhokl}), respectively, and $\omega_{1}(x,y)$ and 
$\omega_{2}(x,y)$ are defined in (\ref{eq:46}). In the limit as $q$ tends to 1, the Rodrigues' 
formula (\ref{eq:rodrigues}) reduces to the one derived in \cite{MR1717891} for the continuous 
case. Moreover, in the bivariate discrete case a Rodrigues' formula  has been also given in 
\cite{MR2289245} upon employing the same approach as in \cite{MR1717891}.

\section{Monic orthogonal polynomial solutions}\label{S:6}

One essential difference between polynomials in one variable and in several variables is the lack 
of an obvious basis in the latter \cite{MR1827871}. One possibility to avoid this problem is to consider 
graded lexicographical order and use the matrix vector representation, first introduced by Kowalski 
\cite{MR647129,MR647128} and later on studied by Xu \cite{MR1215438,MR1169912}. 

Let $\textbf{x}=(x,y)\in\mathbb{R}^2$, and let $\textbf{x}^n$ ($n\in \mathbb{N}_0$) denote 
the column vector of the monomials $x^{n-k} y^{k}$, whose elements are arranged in graded 
lexicographical order (see \cite[p. 32]{MR1827871}):
\begin{equation}\label{MONO}
\textbf{x}^n= (x^{n-k}y^{k})\,,\quad 0 \leq k \leq n, \quad  n\in \mathbb{N}_0\,.
\end{equation}
Let $\{P_{n-k,k}^n(x,y)\}$ be a sequence of polynomials in the space $\Pi_n^2$ of all polynomials 
of total degree at most $n$ in two variables, $\textbf{x}=(x,y)$, with real coefficients. Such 
polynomials are finite sums of terms of the form $ax^{n-k}y^{k}$, where $a \in \mathbb{R}$.

From now on ${\mathbb{P}}_n$ will denote the (column) polynomial vector
\begin{equation}\label{defPn}
{\mathbb{P}}_n= (P_{n,0}^n(x,y), P_{n-1,1}^n(x,y), \dots,P_{1,n-1}^n(x,y), P_{0,n}^n(x,y))^\text{T}.
\end{equation}
Then, each polynomial vector ${\mathbb{P}}_n$ can be written in terms of the basis (\ref{MONO}) as:
\begin{equation}\label{EXPP}
{\mathbb{P}}_n= G_{n,n}\textbf{x}^n+ G_{n,n-1}\textbf{x}^{n-1}+ \dots + G_{n,0}\,\textbf{x}^0,
\end{equation}
where $G_{n,j}$ are  $(n+1)\times(j+1)$-matrices  and $G_{n,n}$ is a nonsingular square matrix 
of the size $(n+1)\times(n+1)$.

A polynomial vector $\widehat{\mathbb{P}}_n$  is said to be monic if its leading matrix coefficient 
$\widehat{G}_{n,n}$ is the identity matrix (of the size $(n+1)\times(n+1)$), that is,
\begin{equation}\label{EXPPMonico}
\widehat{\mathbb{P}}_n= \textbf{x}^n+ \widehat{G}_{n,n-1}\textbf{x}^{n-1}+ \dots 
+ \widehat{G}_{n,0}\,\textbf{x}^0\,.
\end{equation}
Then each of its polynomial entries $\widehat{P}_{n-k,k}^n(x,y)$ are of the form:
\begin{equation}\label{CompoMonico}
\widehat{P}_{n-k,k}^n(x,y) = x^{n-k} y^{k} + \text{terms of lower total degree}\,.
\end{equation}
In what follows the ``hat'' notation $\widehat{\mathbb{P}}_n$ will represent monic polynomials.

The existence of a recurrence relation for any vector of bivariate discrete orthogonal polynomial 
family can be established in more general settings than those considered here \cite{MR2081045}. 
The following existence theorem, proved in \cite{MR1827871}, can be applied for infinite or finite 
($n=0,1,\dots,N$) sequences of polynomials (see examples 4.1 and 4.2 in \cite{MR2081045}), 
because we are using graded lexicographical order (\ref{MONO}).

\begin{theorem}
Let $\mathcal{L}$ be a positive definite moment linear functional acting on the space $\Pi_n^2$ 
of all polynomials of total degree at most $n$ in two variables, and $\{\mathbb{P}_{n}\}_{n \geq 0}$ 
be an orthogonal family with respect to $\mathcal{L}$. Then, for $n \geq 0$, there exist unique matrices 
$A_{n,j}$ of the size $(n+1) \times (n+2)$, $B_{n,j}$  of the size $(n+1) \times (n+1)$, and $C_{n,j}$ 
of the size $(n+1) \times n$, such that
\begin{equation}\label{RRTT}
x_j\mathbb{P}_n=A_{n,j}\mathbb{P}_{n+1} + B_{n,j}\mathbb{P}_{n} + C_{n,j}\mathbb{P}_{n-1}, 
\quad j =1, 2,
\end{equation}
with the initial conditions $\mathbb{P}_{-1}=0$ and $\mathbb{P}_{0}=1$. Here the notation 
$x_{1}=x$ and $x_{2}=y$ is used.
\end{theorem}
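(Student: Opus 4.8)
The plan is to establish the three-term recurrence relation \eqref{RRTT} by the standard linear-algebra argument for orthogonal polynomials in several variables, adapted from Dunkl and Xu \cite{MR1827871}. First I would observe that for each $j \in \{1,2\}$ the product $x_j \mathbb{P}_n$ is a polynomial vector each of whose entries has total degree at most $n+1$; hence it can be expanded in the (block) basis $\{\mathbb{P}_0, \mathbb{P}_1, \dots, \mathbb{P}_{n+1}\}$, which spans $\Pi_{n+1}^2$. Writing $x_j \mathbb{P}_n = \sum_{k=0}^{n+1} M_{n,k}^{(j)} \mathbb{P}_k$ for suitable rectangular matrices $M_{n,k}^{(j)}$, the coefficient matrices are determined by $M_{n,k}^{(j)} \langle \mathbb{P}_k, \mathbb{P}_k^{\mathrm{T}} \rangle = \langle x_j \mathbb{P}_n, \mathbb{P}_k^{\mathrm{T}} \rangle$, where $\langle \cdot, \cdot \rangle$ denotes the $\mathcal{L}$-inner product applied entrywise and the Gram matrix $\langle \mathbb{P}_k, \mathbb{P}_k^{\mathrm{T}} \rangle$ is positive definite (hence invertible) by the positive-definiteness of $\mathcal{L}$ together with orthogonality. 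This both proves existence and gives the uniqueness, since the $M_{n,k}^{(j)}$ are forced.

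The key step is then to show the expansion truncates: $M_{n,k}^{(j)} = 0$ for $k \le n-2$. This follows from the self-adjointness of multiplication by $x_j$ with respect to $\mathcal{L}$: $\langle x_j \mathbb{P}_n, \mathbb{P}_k^{\mathrm{T}} \rangle = \langle \mathbb{P}_n, (x_j \mathbb{P}_k)^{\mathrm{T}} \rangle$, and since $x_j \mathbb{P}_k$ has entries of degree at most $k+1 \le n-1 < n$, each such entry is a linear combination of the entries of $\mathbb{P}_0, \dots, \mathbb{P}_{n-1}$, all of which are orthogonal to every entry of $\mathbb{P}_n$. Hence that inner product vanishes. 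So only $k = n+1, n, n-1$ survive, and one sets $A_{n,j} := M_{n,n+1}^{(j)}$, $B_{n,j} := M_{n,n}^{(j)}$, $C_{n,j} := M_{n,n-1}^{(j)}$. The stated dimensions $(n+1)\times(n+2)$, $(n+1)\times(n+1)$, $(n+1)\times n$ are then just the sizes of the vectors $\mathbb{P}_{n+1}, \mathbb{P}_n, \mathbb{P}_{n-1}$ (which have $n+2$, $n+1$, $n$ components respectively) paired against the $(n+1)$-component vector $\mathbb{P}_n$. The initial conditions $\mathbb{P}_{-1} = 0$ and $\mathbb{P}_0 = 1$ are conventions making the $n=0$ case consistent, namely $x_j \mathbb{P}_0 = A_{0,j} \mathbb{P}_1 + B_{0,j}$.

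The main obstacle — really the only subtlety — is justifying that the Gram matrices $H_n := \langle \mathbb{P}_n, \mathbb{P}_n^{\mathrm{T}} \rangle$ are invertible, since without this one cannot solve for the coefficient matrices and uniqueness fails. This is exactly where positive definiteness of $\mathcal{L}$ enters: for any nonzero real vector $\mathbf{c} \in \mathbb{R}^{n+1}$, the entry $\mathbf{c}^{\mathrm{T}} \mathbb{P}_n$ is a nonzero polynomial of total degree at most $n$, so $\mathbf{c}^{\mathrm{T}} H_n \mathbf{c} = \mathcal{L}\big( (\mathbf{c}^{\mathrm{T}} \mathbb{P}_n)^2 \big) > 0$, whence $H_n \succ 0$ and in particular $\det H_n \neq 0$. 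In the finite case $n = 0, 1, \dots, N$ the same reasoning applies verbatim up to degree $N$, which is why the theorem covers finite families as well (cf.\ \cite{MR2081045}); one simply stops the recurrence at $n = N$. Since the result is precisely Theorem 3.2.7 (and its corollaries on finite sequences) in \cite{MR1827871}, I would close by citing that reference rather than reproducing the full computation.
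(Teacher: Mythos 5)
Your proposal is correct and is exactly the standard Dunkl--Xu argument (expansion in the orthogonal vector basis, truncation via self-adjointness of multiplication by $x_j$, and invertibility of the Gram matrices $H_n$ from positive definiteness), which is precisely what the paper relies on: it gives no proof of its own but cites \cite{MR1827871}, where this argument appears. So your write-up matches the paper's (cited) proof in both substance and structure, including the remark that the same reasoning covers finite families as in \cite{MR2081045}.
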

In this section we give explicit expressions for the matrices $A_{n,j}$, $B_{n,j}$ and $C_{n,j}$,
which appear in the three-term recurrence relations (\ref{RRTT}), in terms of the coefficients of 
$a_{ii}$, $a_{12j}$ and $b_{i}$  in (\ref{eq:29nn}). These  matrices enable one to compute the 
monic orthogonal polynomial solutions of an admissible potentially self-adjoint second-order partial 
$q$-difference equation of the hypergeometric type.

The weight function (\ref{eq:weight}) determines the moment linear functional $\mathcal{L}$, defined in 
the space $\Pi_{n}^{2}$ of all polynomials of total degree at most $n$ in two variables, in terms 
of a double $q$-integral
\[
\mathcal{L}(P)=\iint_{G} P(x,y) \varrho(x,y) d_{q}x d_{q}y,
\]
 in an appropriate domain $G \subset \mathbb{R}^2$, which can be applied to polynomial vectors. 
Thus, in what follows $\{\widehat{\mathbb{P}}_n\}_{n\in\mathbb{N}_0}$ denotes a monic vector 
polynomial family  solution of ~(\ref{eq:29nn}), that is  orthogonal with respect to $\varrho(x,y)$, 
\begin{equation}\label{ortorho}
\mathcal{L}(\mathbf{x}^m\,\, \widehat{\mathbb{P}}_n^{\,\,T})=\iint_{G} \mathbf{x}^m
\,\, \widehat{\mathbb{P}}_n^{\,\,T} \varrho(x,y) d_{q}x d_{q}y\,=\,
\left\{\begin{array}{l}
0 \in \mathcal{M}^{(m+1,n+1)} \,,\,\, \mbox{if $n > m$,}\\[4mm]
H_n \in \mathcal{M}^{(n+1,n+1)} \,,\,\, \mbox{if $m=n$,}
\end{array}\right.
\end{equation}
where $H_n$ $($of the size $(n+1)\times (n+1)$$)$ is nonsingular.

Let us first introduce the matrices $L_{n,j}$ of the size $(n+1) \times (n+2)$
\begin{equation}\label{defLL}
\begin{array}{ll} L_{n,1}=\begin{pmatrix}
1 & & \text{\circle{10}}&0 \\&\ddots &&\vdots  \\&\text{\circle{10}}&1&0 \end{pmatrix}  &
\text{and} \quad
L_{n,2}=\begin{pmatrix} 0 & 1& &\text{\circle{10}}
\\\vdots &  &\ddots &  \\0&\text{\circle{10}}&&1 \end{pmatrix}
\end{array},
\end{equation}
so that
\begin{equation}\label{DEFLM}
x\, \textbf{x}^n=L_{n,1}\textbf{x}^{n+1}, \quad y\, \textbf{x}^n=L_{n,2}\textbf{x}^{n+1}.
\end{equation}
Observe that
\begin{gather}
x^2\, \textbf{x}^n =L_{n,1}L_{n+1,1}\textbf{x}^{n+2}, \quad y^2\, \textbf{x}^n =L_{n,2}L_{n+1,2}
\textbf{x}^{n+2}\,, \label{LLLL} \\
L_{n,2} L_{n+1,1} = L_{n,1}L_{n+1,2}, \nonumber
\end{gather}
and for $j=1,2$,
\begin{equation}\label{eq:L}
L_{n,j}\,L_{n,j}^{\text{T}} = I_{n+1},
\end{equation}
where $I_{n+1}$ denotes the identity matrix of the size $n+1$.

From the definition of the partial $q$-difference operators in (\ref{eq:defdq}) and (\ref{eq:defdqm1}), 
one obtains that
\begin{equation*}\label{DERMON}
\displaystyle{D_{q}^{j}\textbf{x}^n=\mathbb{E}_{n,j}\,\textbf{x}^{n-1} }  ,\quad
\displaystyle{D_{q^{-1}}^{j}\textbf{x}^n=\mathbb{K}_{n,j}\,\textbf{x}^{n-1} }  , \qquad j=1,2,
\end{equation*}
where the matrices $\mathbb{E}_{n,j}$ of the size $(n+1) \times n$ are given by
\begin{equation}\label{eq:mate}
\begin{array}{rr} \mathbb{E}_{n,1}=\begin{pmatrix}
 \qnum{n} & & &\text{\circle{10}} \\
  & \qnum{n-1}& &  \\
 &  &\ddots &  \\
 &\text{\circle{10}}& & 1 \\
 0 &\dots&0&0\end{pmatrix},  \qquad
\mathbb{E}_{n,2}=\begin{pmatrix} 0&\dots& &0 \\1 & &
&\text{\circle{10}}
\\ & \qnum{2} & &  \\& &\ddots &  \\&\text{\circle{10}}& & \qnum{n}
\end{pmatrix}\,,
\end{array}
\end{equation}
the matrices $\mathbb{K}_{n,j}$ of the size $(n+1) \times n$ are given by
\begin{equation}\label{eq:matk}
\mathbb{K}_{n,1}=\begin{pmatrix}
q^{1-n} \qnum{n} & & &\text{\circle{10}} \\
  & q^{2-n} \qnum{n-1}& &  \\
 &  &\ddots &  \\
 &\text{\circle{10}}& & 1 \\
 0 &\dots&0&0\end{pmatrix},  \qquad
\mathbb{K}_{n,2}=\begin{pmatrix} 0&\dots& &0 \\1 & &
&\text{\circle{10}}
\\ & q^{-1} \qnum{2} & &  \\& &\ddots &  \\&\text{\circle{10}}& & q^{1-n} \qnum{n}
\end{pmatrix}\,,
\end{equation}
and  the $q$-number is defined in (\ref{eq:qnumber}).

Substitute the expansion (\ref{EXPPMonico}) into (\ref{eq:29nn}) and then equate the coefficients 
of $\textbf{x}^{n-1}$ and $\textbf{x}^{n-2}$  to arrive at the following explicit expressions for 
the matrices $\widehat{G}_{n,n-1}$ and $\widehat{G}_{n,n-2}$:
\begin{align}
\widehat{G}_{n,n-1}&={\mathbb{S}}_{n} {\mathbb{F}}_{n-1}^{-1}(\lambda_{n}) , \label{GN1} \\
\widehat{G}_{n,n-2}&= \left( {\mathbb{T}}_{n} + \widehat{G}_{n,n-1} {\mathbb{S}}_{n-1} \right) 
{\mathbb{F}}_{n-2}^{-1}(\lambda_{n}), \label{GN2}
\end{align}
where the nonsingular matrix ${\mathbb{F}}_{n}(\lambda_{\ell})$ is given by
\begin{equation}\label{MATRIX:FF}
{\mathbb{F}}_{n}(\lambda_{\ell})=(\lambda_{n} - \lambda_{\ell}) {\mathbb{I}}_{n+1},
\end{equation}
$\lambda_{n}$ is given in (\ref{EC:AUTOVALOR}), ${\mathbb{I}}_{n+1}$ denotes the identity matrix 
of the size $(n+1) \times (n+1)$, and the matrix ${\mathbb{S}}_{n}$ of the size $(n+1)\times n$ is 
given in terms of the coefficients of the polynomials $a_{ii}$, $a_{12i}$ and $b_{i}$  from the partial 
$q$-difference equation (\ref{eq:29nn}), explicitly written down in (\ref{sigmas}), as
\begin{align}\label{COEFPDE}
{\mathbb{S}}_{n} &= \begin{pmatrix}
s_{1,1} & &  &  & \text{\circle{15}} \\
s_{2,1} & s_{2,2} & & &  \\
& \ddots & \ddots & &  \\
& & s_{n-1,n-2} & s_{n-1,n-1} & \\
& & & s_{n,n-1} & s_{n,n} \\
& \text{\circle{15}} & & 0 & s_{n+1,n}
\end{pmatrix}\quad (n\geq 1)\,.
\end{align}
Here, for $1 \leq i \leq n$,
\begin{align*}
s_{i,i}&=\qnum{n-i+1} \left( g_{1}+q^{1+i-n} b_{1} \qnum{n-i} + c_{3a} \qnum{i-1} 
+ q^{2-n} c_{3d} \qnum{i-1} \right) \,,\\
s_{i+1,i}&=\qnum{i-1} \left(b_2 q^{3-i} \qnum{i-2}+\qnum{n+1-i} \left(b_{3a}+b_{3d}   
q^{2-n}\right)+g_{2}\right)\,.
\end{align*}

Besides, the matrix ${\mathbb{T}}_{n}$ of the size $(n+1) \times (n-1)$, which appears in (\ref{GN2}), 
is given in terms of the coefficients of the partial $q$-difference equation (\ref{eq:29nn}) as
\begin{align}\label{COEFPDE2}
{\mathbb{T}}_{n} =d_{3a} {\mathbb{E}}_{n,2} {\mathbb{E}}_{n-1,1} + c_{1} q 
{\mathbb{K}}_{n,1} {\mathbb{E}}_{n-1,1} + c_{2} q {\mathbb{K}}_{n,2} 
{\mathbb{E}}_{n-1,2} + d_{3d} {\mathbb{K}}_{n,2} {\mathbb{K}}_{n-1,1} \quad (n\geq 2)\,,
\end{align}
where the matrices ${\mathbb{E}}_{n,i}$ and ${\mathbb{K}}_{n,i}$, $i=1,2$, are defined 
by  (\ref{eq:mate}) and (\ref{eq:matk}), respectively.

Now, in this monic situation, it is possible to generalize the well-known explicit expressions for the 
coefficients in the three-term recurrence relation for the one variable case \cite[p. 14]{MR1149380} 
to the $q$-bivariate case. This is achieved with the aid of the auxiliary matrices $L_{n,j}$, defined 
in (\ref{defLL}) and (\ref{DEFLM}), and the following result, proved in \cite{MR2853206} in the 
continuous bivariate situation and therefore valid also in this $q$-bivariate situation, because it 
is a consequence of the three-term recurrence relations (\ref{RRTT}).
\begin{theorem}\label{TTRRenG}
In the monic case, the explicit expressions of the matrices $A_{n,j}$, $B_{n,j}$ and $C_{n,j}$ ($j =1, 2$), 
that appear in (\ref{RRTT}) in terms of the values of the leading coefficients $\widehat{G}_{n,n-1}$ and 
$\widehat{G}_{n,n-2}$ (see (\ref{GN1}) and (\ref{GN2}), respectively), are given by
\begin{equation}\label{COEFTTR}
\begin{cases}
A_{n,j}=L_{n,j}, \quad n \geq 0,\\
B_{0,j}=-L_{0,j}\widehat{G}_{1,0}, \quad B_{n,j}=\widehat{G}_{n,n-1}L_{n-1,j}-L_{n,j} \widehat{G}_{n+1,n}, 
\quad n \geq 1,  \\
C_{1,j}=-(L_{1,j}\widehat{G}_{2,0}+B_{1,j}\widehat{G}_{1,0}), \\
C_{n,j}= \widehat{G}_{n,n-2}L_{n-2,j}-L_{n,j} \widehat{G}_{n+1,n-1}-B_{n,j} \widehat{G}_{n,n-1}, \quad n \geq 2\,,
\end{cases}
\end{equation}
where the matrices $L_{n,j}$ have been introduced in (\ref{defLL}).
\end{theorem}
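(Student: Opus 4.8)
The plan is to derive the formulas in (\ref{COEFTTR}) directly from the expansion (\ref{EXPPMonico}) of the monic polynomial vectors together with the action of the auxiliary matrices $L_{n,j}$ on the monomial basis, following the approach of \cite{MR2853206}. First I would substitute the expansions $\widehat{\mathbb{P}}_n= \textbf{x}^n+ \widehat{G}_{n,n-1}\textbf{x}^{n-1}+ \widehat{G}_{n,n-2}\textbf{x}^{n-2}+\dots$ into both sides of the three-term recurrence $x_j\widehat{\mathbb{P}}_n=A_{n,j}\widehat{\mathbb{P}}_{n+1} + B_{n,j}\widehat{\mathbb{P}}_{n} + C_{n,j}\widehat{\mathbb{P}}_{n-1}$, and then express every occurrence of $x_j\textbf{x}^k$ using the identities $x\,\textbf{x}^k=L_{k,1}\textbf{x}^{k+1}$ and $y\,\textbf{x}^k=L_{k,2}\textbf{x}^{k+1}$ from (\ref{DEFLM}). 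This rewrites the whole equation as an identity between linear combinations of the monomial vectors $\textbf{x}^{n+1},\textbf{x}^{n},\textbf{x}^{n-1},\dots$, whose coefficients (which are matrices) must agree on the two sides.

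The second step is to equate the matrix coefficients degree by degree. The coefficient of $\textbf{x}^{n+1}$ on the left is $L_{n,j}$ and on the right it is $A_{n,j}\cdot I = A_{n,j}$ (since $\widehat{\mathbb{P}}_{n+1}$ is monic), which immediately gives $A_{n,j}=L_{n,j}$. Matching the coefficient of $\textbf{x}^{n}$ gives $L_{n,j}\widehat{G}_{n+1,n} + \widehat G_{n,n-1}L_{n-1,j}^{\,\prime}\text{-type terms}$... more precisely, the left side contributes $L_{n,j}$ acting after $\widehat{G}_{n,n-1}$, i.e. a term $\widehat{G}_{n,n-1}L_{n-1,j}$ (after reindexing $x_j \textbf{x}^{n-1}=L_{n-1,j}\textbf{x}^{n}$), while the right side contributes $A_{n,j}\widehat{G}_{n+1,n}+B_{n,j}$; solving yields $B_{n,j}=\widehat{G}_{n,n-1}L_{n-1,j}-L_{n,j}\widehat{G}_{n+1,n}$ for $n\geq 1$, and the degenerate case $n=0$ gives $B_{0,j}=-L_{0,j}\widehat{G}_{1,0}$. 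Matching the coefficient of $\textbf{x}^{n-1}$ analogously produces $C_{n,j}=\widehat{G}_{n,n-2}L_{n-2,j}-L_{n,j}\widehat{G}_{n+1,n-1}-B_{n,j}\widehat{G}_{n,n-1}$ for $n\geq 2$, with the boundary case $C_{1,j}=-(L_{1,j}\widehat{G}_{2,0}+B_{1,j}\widehat{G}_{1,0})$ handled separately because $\widehat{\mathbb{P}}_{-1}=0$ truncates the lower-degree terms.

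The main obstacle, and the point requiring the most care, is bookkeeping the reindexing of the $L$-matrices: when $L_{k,j}$ is applied to $\textbf{x}^{k}$ it raises the degree by one, so the term $\widehat{G}_{n,n-i}\textbf{x}^{n-i}$ on the left, after multiplication by $x_j$, becomes $\widehat{G}_{n,n-i}L_{n-i,j}\textbf{x}^{n-i+1}$, and one must track the precise subscripts on every $L_{k,j}$ and every $\widehat G_{k,\ell}$ so that the matrix dimensions are compatible and the sizes quoted in the theorem statement (namely $A_{n,j}$ of size $(n+1)\times(n+2)$, $B_{n,j}$ of size $(n+1)\times(n+1)$, $C_{n,j}$ of size $(n+1)\times n$) come out correctly. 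One also needs the relations (\ref{LLLL}), in particular $L_{n,2}L_{n+1,1}=L_{n,1}L_{n+1,2}$ and $L_{n,j}L_{n,j}^{\text T}=I_{n+1}$, to check consistency and, if desired, to rewrite the expressions, but those are the relations already recorded in the excerpt. Since the existence and uniqueness of the matrices $A_{n,j},B_{n,j},C_{n,j}$ is guaranteed by the cited existence theorem, it suffices to verify that the explicit formulas in (\ref{COEFTTR}) satisfy the recurrence, which is exactly what the coefficient-matching above accomplishes; the argument is purely linear-algebraic and carries over verbatim from the continuous bivariate case treated in \cite{MR2853206}.
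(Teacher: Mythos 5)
Your proposal is correct and is essentially the argument the paper relies on: the paper gives no in-text proof, simply invoking the continuous-case result of \cite{MR2853206} and observing that the identity is a purely formal consequence of the three-term recurrence (\ref{RRTT}) together with the monic normalization. Your coefficient-matching in the graded monomial basis, determining $A_{n,j}$, $B_{n,j}$, $C_{n,j}$ successively from the blocks of degree $n+1$, $n$, $n-1$ (their existence and uniqueness already guaranteed by the cited existence theorem), with the boundary cases $n=0,1$ handled via $\widehat{\mathbb{P}}_{-1}=0$, $\widehat{\mathbb{P}}_{0}=1$, is exactly that standard argument.
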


It is of interest to remark here that, as it is described in \cite{MR1827871}, since
\begin{equation}
{\rm{rank}} (L_{n,j})=n+1= {\rm{rank}} (C_{n+1,j}), \quad j=1,2, \quad n \geq 0,
\end{equation}
the columns of the joint matrices
\[
L_n=\left(L_{n,1}^T \,,L_{n,2}^T \right)^T \quad \text{and}\quad C_n
=\left(C_{n,1}^T \,,C_{n,2}^T \right)^T,
\]
of the size $(2n+2)\times(n+2)$ and $(2n+2)\times n$, respectively, are linearly independent, 
that is,
\begin{equation}
{\rm{rank}} (L_{n})= n+2, \quad {\rm{rank}} (C_{n})= n.
\end{equation}
Therefore the matrix $L_{n}$ has full rank, so that there exists a unique matrix $D_n^{\dag}$ 
of the size $(n+2)\times(2n+2)$, called the generalized inverse of $L_{n}$,
\begin{equation}
D_n^{\dag}= \left(D_{n,1} |D_{n,2} \right)=\left(L_n^TL_n\right)^{-1}L_n^T,
\end{equation}
such that
\[
D_n^{\dag}L_{n}=I_{n+2}.
\]

Moreover, using the left inverse $D_n^{\dag}$ of the joint matrix $L_n$,
\[ D_n^{\dag}=\begin{pmatrix}
1 & & && 0& & &&
\\&1/2&&\bigcirc &1/2&& \bigcirc &&\\&&\ddots&
&&\ddots&&\\& \bigcirc && 1/2& \bigcirc  &&1/2&&\\&&&&0&&&1
\end{pmatrix},
\]
one can write a recursive formula for the monic orthogonal polynomials
\begin{equation}\label{RF}
\widehat{\mathbb{P}}_{n+1}=D_n^{\dag}\left[\begin{pmatrix} x
\\y \end{pmatrix}\otimes I_{n+1}-B_{n}\right]
{\widehat{\mathbb{P}}}_{n}-D_n^{\dag} C_{n}
\widehat{\mathbb{P}}_{n-1}, \quad n \geq 0,
\end{equation}
with the initial conditions $\widehat{\mathbb{P}}_{-1}=0$, $\widehat{\mathbb{P}}_{0}=1$. 
In (\ref{RF}) the symbol $\otimes$ denotes the Kronecker product and
\begin{equation}\label{JOMA}
B_n=\left(B_{n,1}^T \,,B_{n,2}^T \right)^T\,, \quad
C_n=\left(C_{n,1}^T \,,C_{n,2}^T \right)^T\,,
\end{equation}
are matrices of the size $(2n+2)\times(n+1)$ and $(2n+2)\times n$, respectively, which can be 
obtained by using  (\ref{COEFTTR}) in terms of the coefficients of the partial $q$-difference 
equation (\ref{eq:29nn}), explicitly given in (\ref{sigmas}). This means that the recurrence 
(\ref{RF}) gives another realisation of \cite[(3.2.10)]{MR1827871}, already appeared in the 
bivariate discrete case in \cite{MR2390890}.

Therefore, from (\ref{RF}) it is possible to compute a monic orthogonal polynomial solution 
of an admissible potentially self-adjoint linear second-order partial $q$-difference equation of the 
hypergeometric type (\ref{eq:29nn}).

\section{Illustrative example}\label{section:example}

In this section we discuss in detail an example, which is related to the admissible potentially 
self-adjoint linear second-order partial $q$-difference equation of the hypergeometric type, 
satisfied by a (non-monic) bivariate extension of big $q$-Jacobi polynomials, recently introduced 
in \cite{MR2559345,Lewanowicz20138790}. The monic orthogonal polynomial solutions are expressed 
by means of the three-term recurrence relations, that govern them, and also explicitly in terms 
of generalized bivariate basic hypergeometric series. Moreover, a third (non-monic) solution of 
the same partial $q$-difference equation  is provided by using the Rodrigues' re\-presentation 
(\ref{eq:rodrigues}). In the limit when $q \uparrow 1$  the partial $q$-difference equation reduces 
to a second-order partial differential equation of the hypergeometric type, with monic Appell polynomials 
as solutions. Limit relations between the three orthogonal polynomial solutions of the partial $q$-difference 
equation and corresponding orthogonal polynomial solutions of the partial differential equation are explicitly 
given. Besides, the corres\-ponding orthogonality weight functions are also linked by appropriate limit relations 
as $q \uparrow 1$.

{ \bf Example.} Lewanowicz and Wo\'zny  \cite{MR2559345} have recently introduced 
the following bivariate extension of the big $q$-Jacobi polynomials
\begin{multline}\label{eq:bbqj}
P_{n,k}(x,y;a,b,c,d;q):=P_{n-k}(y;a,bcq^{2k+1},dq^{k};q)
y^{k}(dq/y;q)_{k}P_{k}(x/y;c,b,d/y;q), \\ n\in
\mathbb{N}\,,\quad \,k=0,1,\dots,n, \quad q \in (0,1),\quad 0< aq,bq,cq <1,\quad d< 0,
\end{multline}
where the univariate big $q$-Jacobi polynomials \cite{MR838970}  (see also \cite[Eq. (19.5.1)]{MR2656096}) 
are defined by means of the $_3 \phi_{2}$ basic hypergeometric series as
\[
P_{m}(t;A,B,C;q):=\qhyper{3}{2}{q^{-m},\,ABq^{m+1},\,t}{Aq,\,Cq}{q}{q}, \,\, 0<A<1/q, \,\,\, 0<B<1/q, \,\,\, C<0.
\]

They have demonstrated that the polynomials (\ref{eq:bbqj}) satisfy a linear second-order 
partial $q$-difference equation  of the form  (\ref{eq:29nn}),  with coefficients given by
\begin{equation}\label{ste1}
\begin{cases}
a_{11}(x)= {\sqrt{q}}\,\left( d\,q - x \right) \,\left( a\,c\,q^2 - x \right) ,\quad a_{22}(y)
={\sqrt{q}}\,\left( a\,q - y \right) \,\left( d\,q - y \right), \\
a_{12a}(x,y)= a\,c\,q^4\,\left( d - b\,x \right) \,\left( 1 - y \right)  ,  \quad a_{12d}(x,y)
=\left( d\,q - x \right) \,\left( a\,q - y \right), \\[4mm]
\displaystyle{b_1(x)=\frac{q\,\left( q\,\left( d - a\,c\,d\,q^2 +
a\,c\,q\,\left( 1 + b\,q\,\left( -1 + x \right)  \right)  \right) - x \right) }{-1 + q}}  ,\\[4mm]
\displaystyle{b_2(y)=\frac{q\,\left( d\,q + a\,q\,\left( 1 - d\,q + b\,c\,q^2\,
\left( -1 + y \right)  \right)  - y \right) }{-1 + q}},\\
\displaystyle{\lambda_{n}=q^{2-n} \qnum{n} \frac{ \left(1-a b c q^{n+2}\right)}{q-1}}.
\end{cases}
\end{equation}
Thus this $q$-difference equation is admissible, potentially self-adjoint and of the hypergeometric type. 

In \cite{MR2559345} it has been proved that the polynomials (\ref{eq:bbqj}) satisfy the following 
orthogonality relation
\begin{multline}\label{ortstan}
\int_{dq}^{aq}\int_{dq}^{cqy}W(x,y;a,b,c,d;q)P_{n,k}(x,y;q)P_{m,l}(x,y;q)d_{q}xd_{q}y\\=H_{n,k}(a,b,c,d;q)\delta_{n,m}\delta_{k,l},
\end{multline}
where $0<aq,bq,cq<1$, $d<0$, and the weight function is defined by
\begin{equation}\label{pesostan}
W(x,y;a,b,c,d;q):=\frac{(dq/y,c^{-1}x/y,x/d,y/a,y/d;q)_{\infty}}{y(c^{-1}d/y,cqy/d,x/y,bx/d,y;q)_{\infty}}.
\end{equation}
Here the $q$-shifted factorial $\qf{a}{q}{k}$ is equal to
\[
\qf{a}{q}{0}=1, \qquad \qf{a}{q}{k}=\prod_{j=0}^{k-1}(1-a q^{j}), 
\qquad (k=1,2,\dots, \,\text{or}\,\, \infty).
\]
and we have employed the conventional notation 
\[
\qf{a_{1},\dots,a_{r}}{q}{k}=\qf{a_{1}}{q}{k} \cdots \qf{a_{r}}{q}{k}
\]
for products of $q$-shifted factorials.

It is easy to check that the $W(x,y;a,b,c,d;q)$ is a solution of the $q$-Pearson's system (\ref{eq:pearson}), which has been presented in a different form in \cite[Lemma 3.1]{Lewanowicz20138790} for this particular example. 
From the explicit expression for the weight function (\ref{eq:weight}), we have (up to a normalization 
constant), upon taking into account that  $x_{0}=dq$,
\begin{equation}\label{eq:610n}
\varrho(x,y)=\frac{
\qf{y/a,x/d,d q/y,x/(cy),y/d}{q}{\infty}
}
{
\qf{y,x/y,bx/d,dq/(cy),cy/d}{q}{\infty}
},
\end{equation}
which coincides with the weight function, given in (\ref{pesostan}), up to the positive multiplicative 
constant $-d/c$. Observe that equations (\ref{eq:nova3}), (\ref{eq:key}), and (\ref{eq:coupling}) 
also hold.

The partial $q$-difference (\ref{eq:29nn}) with polynomial coefficients (\ref{ste1}) has another 
(non-monic) orthogonal polynomial solution which can be computed from the Rodrigues' formula 
(\ref{eq:rodrigues}) as
\begin{multline}
\tilde{P}_{n,m}(x,y;a,b,c,d;q)
=\frac{\Lambda_{n,m}}{\varrho(x,y)} \\ \times[D_{q}^{1}]^{(n)} [D_{q}^{2}]^{(m)} 
\left[ \varrho(x,y) x^{2n}y^{2m} \qf{dq/x}{q}{n} \qf{aq/y}{q}{m} \qf{x/y}{q}{m} 
\qf{cqy/x}{q}{n} \right],
\end{multline}
where $\varrho(x,y)$ is given in (\ref{eq:610n}) and $\Lambda_{n,m}$ is a normalizing constant.

The partial $q$-difference equation (\ref{eq:29nn}), with coefficients given in (\ref{ste1}), has a third 
(monic) orthogonal polynomial solution, which can be computed recursively from Theorem \ref{TTRRenG}. 
From (\ref{COEFTTR}) it follows that the matrix $B_{n,1}$ has entries
\begin{equation}\label{eq:bn1}
\begin{cases}
\displaystyle{\frac{d q^{-i+n+2} \left(a c q^{2 i-1} \left(q^{- i+n+1} \left(b \left(a c q^{i+n+1}
+q^{-i+n+2}-q-1\right)-q-1\right)+1\right)+1\right)}
 {\left(a b c q^{2 n+1}-1\right) \left(a b c q^{2 n+3}-1\right)}} & \\[3mm]
\displaystyle{+\frac{a c q^{n+2} \left(-b (q+1) q^{n-i} \left(a c q^{2 i}+q\right)+a
   b (b+1) c q^{2 n+2}+b+1\right)}{\left(a b c q^{2 n+1}-1\right)
   \left(a b c q^{2 n+3}-1\right)}}, 
  \quad (i=j), \\[3mm]
\displaystyle{-\frac{\left(q^{i-1}-1\right) \left(a q^{i-1}-1\right) q^{-i+n+2}
   \left(a b c d q^{2 n+2}-a b c (q+1) q^{n+1}+d\right)}{\left(a b c
   q^{2 n+1}-1\right) \left(a b c q^{2 n+3}-1\right)}}, \quad (i=j+1), \\
   0, \quad \text{otherwise},
   \end{cases}
\end{equation}
the matrix $B_{n,2}$ has entries
\begin{equation}\label{eq:bn2}
\begin{cases}
\displaystyle{q^{-i} \left(\frac{q \qnum{i} \left(a q^i-1\right) \left(d q^{n+1}-1\right)}{a b c q^{2 n+3}-1}
+\frac{q \qnum{i-1}   \left(q-a q^i\right) \left(d q^n-1\right)}{a b c q^{2   n+1}-1}+q\right)}, \quad (i=j), \\[3mm]
\displaystyle{-\frac{a c q^{i+1} \left(q^{-i+n+1}-1\right) \left(b
   q^{-i+n+1}-1\right) \left(a b c q^{2 (n+1)}-d (q+1)
   q^n+1\right)}{\left(a b c q^{2 n+1}-1\right) \left(a b c q^{2
   n+3}-1\right)}}, \quad (i=j-1), \\[3mm]
   0, \quad \text{otherwise,}
\end{cases}
\end{equation}
the matrix $C_{n,1}$ has entries
\begin{equation}\label{eq:cn1}
\begin{cases}
\displaystyle{-\frac{a c q^{n+2} \left(d q^n-1\right) \left(q^{-i+n+1}-1\right)
   \left(b q^{-i+n+1}-1\right) }{\left(a b c q^{2
   n}-1\right) \left(a b c q^{2 n+1}-1\right)^2 \left(a b c q^{2
   n+2}-1\right)}} \\[3mm]
\times \displaystyle{\left(a c q^{i+n}-1\right) \left(a b c
   q^{n+1}-d\right) \left(a b c q^{i+n}-1\right)}, \quad (i=j), \\[4mm]
\displaystyle{-\frac{a c \left(q^{i-1}-1\right) \left(a q^{i-1}-1\right) \left(d
   q^n-1\right) q^{-i+2 n+3} \left(a b c q^{n+1}-d\right)}{\left(a b
   c q^{2 n}-1\right) \left(a b c q^{2 n+1}-1\right)^2 \left(a b c
   q^{2 n+2}-1\right)}}  \\[3mm]
\times \displaystyle{\left( -b (q+1) q^{-i+n-1} \left(a c q^{2 i}+q^2\right)+a b (b+1) c q^{2
   n+1}+b+1 \right) }, \quad (i=j+1), \\[3mm]
\displaystyle{-\frac{a b c \left(a q^i-q\right) \left(a q^i-q^2\right) \left(d
   q^n-1\right) q^{-2 i+3 n+2} \qf{q}{q}{i-1} \left(a b c
   q^{n+1}-d\right)}{\qf{q}{q}{i-3} \left(a b c q^{2 n}-1\right) \left(a
   b c q^{2 n+1}-1\right)^2 \left(a b c q^{2 n+2}-1\right)}}, \quad (i=j+2), \\[3mm]
   0, \quad \text{otherwise,}
\end{cases}
\end{equation}
and the matrix $C_{n,2}$ has entries
\begin{equation}\label{eq:cn2}
\begin{cases}
\displaystyle{-\frac{a c (q-1) \left(d q^n-1\right) q^{n-i} \qnum{-i+n+1}
   \left(b q^{n+1}-q^i\right) \left(a b c q^{n+1}-d\right)
  }{\left(a b c q^{2 n}-1\right)
   \left(a b c q^{2 n+1}-1\right)^2 \left(a b c q^{2 n+2}-1\right)}}, & \\[3mm]
   \displaystyle{\times \left((a+1) q^{i+1} \left(a b c q^{2 n+1}+1\right)-a b c (q+1)
   q^{2 n+2}-a (q+1) q^{2 i}\right)}, \quad (i=j), \\[3mm]
\displaystyle{-\frac{a \left(q^{i-1}-1\right) q^{n+1} \left(a q^{i-1}-1\right)
   \left(d q^n-1\right) \left(b c q^{-i+2 n+2}-1\right) }{\left(a b c
   q^{2 n}-1\right) \left(a b c q^{2 n+1}-1\right)^2 \left(a b c q^{2
   n+2}-1\right)}} \\[3mm]
   \times \displaystyle{\left(a b c
   q^{n+1}-d\right) \left(a b c q^{-i+2 n+2}-1\right)}, \quad (i=j+1), \\[3mm]
\displaystyle{-\frac{a^2 c^2 q^{n+2} \left(d q^n-1\right) \left(q^i-b q^n\right)
   \left(q^i-b q^{n+1}\right) \qf{q}{q}{n-i+1} \left(a b c
   q^{n+1}-d\right)}{\qf{q}{q}{n-i-1} \left(a b c q^{2 n}-1\right)
   \left(a b c q^{2 n+1}-1\right)^2 \left(a b c q^{2 n+2}-1\right)}}, \quad (i=j-1), \\[3mm]
   0, \quad \text{otherwise.}
\end{cases}
\end{equation}

From (\ref{RF}) each column polynomial vector (\ref{defPn}) can be obtained by using the 
above matrices $B_{n,i}$ and $C_{n,i}$, $i=1,2$.

\begin{rem}
In \cite{MR2559345} the authors obtained the matrices of the three-term recurrence relations (\ref{RRTT}),
satisfied by the non-monic solution (\ref{eq:bbqj}), by using the recurrence relation that governs  the 
univariate $q$-Jacobi polynomials. Notice that we have derived the matrices in the monic case from our 
approach given in Section \ref{S:6}, with different shapes as in the non-monic case.
\end{rem}

\begin{rem}
Observe that the monic polynomial solution $ \widehat{P}_{n,m}(x,y;a,b,c,d;q)$  of the partial 
$q$-difference equation (\ref{eq:29nn}), with coefficients, given in (\ref{ste1}) and obtained 
from Theorem \ref{TTRRenG} (with the matrix coefficients that are given above),  can be also 
written in terms of generalized bivariate basic hypergeometric series as
\begin{multline}\label{eq:solhyp}
 \widehat{P}_{n,m}(x,y;a,b,c,d;q)= \frac{
\left(\frac{d}{b}\right)^{n} \qf{aq}{q}{m} \qf{bq}{q}{n} \qf{d q^{n+1}}{q}{m} \qf{a b c q^{m+2}/d}{q}{n}
}
{
\qf{a b c q^{m+n+2}}{q}{n+m}
} \\
\times \sum_{i=0}^{n} \sum_{j=0}^{m} \frac{(-1)^{-i-j}  \qbinom{n}{i}
   \qbinom{m}{j} q^{\frac{1}{2} (i (i-2 n+1)+j (j-2 m+1))}
\qf{a b c q^{m+n+2}}{q}{i+j} 
}
{
\qf{aq}{q}{j} \qf{bq}{q}{i}
\qf{d q^{n+1}}{q}{j}   \qf{a b c q^{m+2}/d}{q}{i}
 }  \qf{bx/d}{q}{i} \qf{y}{q}{j} \\
= \frac{
\left(\frac{d}{b}\right)^{n} \qf{aq}{q}{m} \qf{bq}{q}{n} \qf{d q^{n+1}}{q}{m} \qf{a b c q^{m+2}/d}{q}{n}
}
{
\qf{a b c q^{m+n+2}}{q}{n+m}
} \\
\times
\qferiet{1:2;2}{0:2;2}{abcq^{n+m+2}}{q^{-n},bx/d;q^{-m},y}{-}{bq,abcq^{m+2}/d;aq,dq^{n+1}}{q:q,q}{0,0,0},
\end{multline}
where the generalized bivariate basic hypergeometric series is defined by \cite{MR834385}
\begin{multline*}
\qferiet{\lambda:r;s}{\mu:u;v}{\alpha_{1},\dots,\alpha_{\lambda}}{a_{1},\dots,a_{r};c_{1},\dots,c_{s}}
{\beta_{1},\dots,\beta_{\mu}}{b_{1},\dots,b_{u};d_{1},\dots,d_{v}}{q:x,y}{i,j,k}\\
 = \sum_{m,n=0}^{\infty} 
\frac{
\qf{\alpha_{1},\dots,\alpha_{\lambda}}{q}{m+n}
\qf{a_{1},\dots,a_{r}}{q}{m}
\qf{c_{1},\dots,c_{s}}{q}{n}
}
{
\qf{\beta_{1},\dots,\beta_{\mu}}{q}{m+n}
\qf{b_{1},\dots,b_{u}}{q}{m}
\qf{d_{1},\dots,d_{v}}{q}{n}
} \frac{x^{m} y^{n} q^{i \binom{m}{2}+j\binom{n}{2}+kmn}}{\qf{q}{q}{m}\,\qf{q}{q}{n}}.
\end{multline*}
\end{rem}

{\bf Limit relations.} Let us consider the case when $a=q^{\alpha}$, $b=q^{\beta}$, 
$c=q^{\gamma}$ and $d=-q^{\delta}$.  As $q \uparrow 1$ the second-order partial 
$q$-difference equation goes formally to the following second-order partial differential 
equation of the hypergeometric type 
\begin{multline}\label{eq:genappell}
\left(x^2-1 \right) \frac{\partial^{2}}{\partial x^{2}} f(x,y)+
\left(y^2-1 \right) \frac{\partial^{2}}{\partial y^{2}} f(x,y) +
2 \left((x+1) (y-1) \right) \frac{\partial^{2}}{\partial x \partial y} f(x,y) \\
+ \left(x (\alpha +\beta +\gamma +3)+\alpha -\beta +\gamma +1 \right) \frac{\partial}{\partial x} f(x,y) \\
+ \left(y (\alpha +\beta +\gamma +3)+\alpha -\beta -\gamma -1 \right) \frac{\partial}{\partial y} f(x,y) \\
-n (\alpha +\beta +\gamma +n+2) f(x,y)=0.
\end{multline}
An orthogonality weight function for the polynomial solutions of the above equation can be computed 
in the same way as in \cite{MR2853206}, giving rise to
\begin{equation}\label{pesojac}
\varrho^{(\alpha,\beta,\gamma)}(x,y)=(1-y)^{\alpha } (x+1)^{\beta } (y-x)^{\gamma },
\end{equation}
in the triangular domain 
\begin{equation}\label{domainjac}
{\mathcal{R}}=\{(x,y) \in {\mathbf{R}}^{2} \, \vert \, x \leq y \leq 1, \,\,\, -1 \leq x \leq 1 \}.
\end{equation}
It is important to note that
\begin{equation*}
\lim_{q \uparrow 1} W(x,y;q^\alpha,q^\beta,q^\gamma,-q^\delta;q)
   = (1-y)^{\alpha } (x+1)^{\beta } (y-x)^{\gamma }=\varrho^{(\alpha,\beta,\gamma)}(x,y).
\end{equation*}
The monic polynomial solutions of (\ref{eq:genappell}) satisfy a three-term recurrence relation 
of the form  (\ref{RRTT}), where the matrix coefficients can be easily computed by considering 
the limit 
as $q \uparrow 1$ in (\ref{eq:bn1})---(\ref{eq:cn2}) for $a=q^{\alpha}$, $b=q^{\beta}$, 
$c=q^{\gamma}$ and $d=-q^{\delta}$, or, eventually, from \cite{AGR2012}. The monic 
orthogonal polynomial solutions of (\ref{eq:genappell}) can be written in terms of generalized 
Kamp\'e de F\'eriet hypergeometric series as
\begin{multline}\label{ferietappell}
{\widehat{\mathrm{A}}}_{n,m}^{(\alpha,\beta,\gamma)}(x,y)=(-1)^{n} 2^{n+m}\frac{(\alpha+1)_{m}
\,(\beta+1)_{n}}{(\alpha +\beta +\gamma +m+n+2)_{n+m}}\,
\\ \times \feriet{1:1;1}{0:1;1}{\alpha +\beta +\gamma +m+n+2}{-n;-m}{-}{\beta+1;\alpha+1}
{\frac{x+1}{2}}{\frac{1-y}{2}}.
\end{multline}

\begin{rem}
We would like to mention the following limit relation between the monic bivariate big 
$q$-Jacobi polynomials in (\ref{eq:solhyp}) and the monic bivariate Jacobi polynomials, 
defined in (\ref{ferietappell}), 
\begin{equation}
\lim_{q \to 1} \widehat{P}_{n,m}(x,y;q^{\alpha},q^\beta,q^\gamma,-q^{\delta};q)
={\widehat{\mathrm{A}}}_{n,m}^{(\alpha,\beta,\gamma)}(x,y).
\end{equation}
\end{rem}

\begin{rem}
Note the following limit relation for the non-monic bivariate big $q$-Jacobi polynomials, 
defined in (\ref{eq:bbqj}),
\begin{multline*}
\lim_{q \to 1} P_{n,m}(x,y;q^{\alpha},q^{\beta},q^{\gamma},-q^{\delta};q)
=(y+1)^m \, \hyper{2}{1}{-m,\beta +\gamma +m+1}{\gamma   +1}{\frac{y-x}{y+1}} \\
\times \hyper{2}{1}{m-n,\alpha +\beta+\gamma +m+n+2}{\alpha +1}{\frac{1-y}{2}} 
=:J_{n,m}(x,y;\alpha,\beta,\gamma), \quad 0 \leq m \leq n.
\end{multline*}
The polynomials $J_{n,m}(x,y;\alpha,\beta,\gamma)$ are a non-monic polynomial solution 
of  (\ref{eq:genappell}) and they are orthogonal on the same domain (\ref{domainjac}) 
with respect to the weight (\ref{pesojac}). This non-monic polynomial solution can be written 
as
\[
J_{n,m}(x,y;\alpha,\beta,\gamma)=\frac{m! (y+1)^m (n-m)! }{(\gamma +1)_m (\alpha +1)_{n-m}} 
P_m^{(\gamma ,\beta )}\left(\frac{2 x-y+1}{y+1}\right) P_{n-m}^{(\alpha ,\beta +\gamma +2 m+1)}(y),
\]
where
\[
P_{n}^{(a,b)}(x)=\frac{(a+1)_{n}}{n!} \hyper{2}{1}{-n,n+a+b+1}{a+1}{\frac{1-x}{2}}, 
\qquad a>-1, \quad b>-1,
\]
are the Jacobi polynomials \cite[Eq. (9.8.1)]{MR2656096}.
\end{rem}

There exists at least a third family of orthogonal polynomial solutions of the partial differential equation 
(\ref{eq:genappell}) (on the same domain ${\mathcal{R}}$, defined in (\ref{domainjac}), and with respect 
to the weight function $\varrho^{(\alpha,\beta,\gamma)}$, given in (\ref{pesojac})). The non-monic 
polynomials which can be computed from the Rodrigues' formula \cite[Eq. (36)]{AGR2012},
\begin{equation}\label{Eq:108}
\tilde{A}_{n,m}^{(\alpha,\beta,\gamma)}(x,y)
=\frac{1}{\varrho^{(\alpha,\beta,\gamma)}(x,y)} \frac{\partial^{n+m}}{\partial x^{n} \, 
\partial y^{m}} \left[ (x+1)^{\beta+n}\,(1-y)^{\alpha+m}\,(y-x)^{\gamma+n+m} \,\right] \,.
\end{equation}

\begin{rem}
Observe the following limit relation between the non-monic bivariate big $q$-Jacobi polynomials, 
derived from the Rodrigues' formula (\ref{eq:rodrigues}), and  the non-monic bivariate Jacobi 
polynomials,  defined by the Rodrigues' formula (\ref{Eq:108}),
\begin{equation}
\lim_{q \to 1} \tilde{P}_{n,m}(x,y;q^{\alpha},q^\beta,q^\gamma,-q^{\delta};q)
=\tilde{A}_{n,m}^{(\alpha,\beta,\gamma)}(x,y).
\end{equation}
\end{rem}

\section{Conclusions and an outlook of future research}

In the present work we have initiated a general approach to the study of solutions of bivariate linear 
second-order partial $q$-difference equations on non-uniform lattices and concentrated our efforts 
on the particular case of $q$-linear lattices of the form $x(s)=q^{s}$ and $y(t)=q^{t}$. We have 
dealt with those bivariate polynomials, written in vector representation (and graded lexicographical order), 
that are solutions of admissible potentially self-adjoint linear second-order partial $q$-difference equation 
of the  hypergeometric type. In this context, we have proved that (similar to the one variable 
hypergeometric-type case) the coefficients of the three-term recurrence relations, obeyed by 
the vector polynomials, can be written explicitly in terms of the coefficients of the partial $q$-difference 
equation, they satisfy. It has been shown that the orthogonality weight function is completely determined 
by the coefficients of the partial $q$-difference equation.

The results obtained suggest that this approach can be extended to the quadratic lattices; this direction
has been already taken up in \cite{tesisjaime} and will be a subject of future research.

\section*{Acknowledgments}

This work has been partially supported by the Ministerio de Econom\'{\i}a y Competi\-tividad of Spain 
under grants MTM2009--14668--C02--01 and MTM2012--38794--C02--01, co-financed by the European 
Community fund FEDER. The participation of NA in this work has been supported by the DGAPA-UNAM
IN105008-3 and SEP-CONACYT 79899 projects ``\'Optica Matem\'atica''.


\begin{thebibliography}{10}

\bibitem{MR1100286}
W.~A. Al-Salam.
\newblock Characterization theorems for orthogonal polynomials.
\newblock In {\em Orthogonal polynomials ({C}olumbus, {OH}, 1989)},
{\em NATO Adv. Sci. Inst. Ser. C Math. Phys. Sci.}, Vol. {\bf 294}, pages 1--24. 
Kluwer Acad. Publ., Dordrecht, 1990.

\bibitem{MR2241592}
R.~{{\'A}}lvarez Nodarse.
\newblock On characterizations of classical polynomials.
\newblock {\em J. Comput. Appl. Math.}, 196(1):320--337, 2006.

\bibitem{MR838970}
G.~E. Andrews and R. Askey.
\newblock Classical orthogonal polynomials.
\newblock In {\em Orthogonal polynomials and applications ({B}ar-le-{D}uc,1984)}, 
{\em Lecture Notes in Math.},  Vol. {\bf 1171},  pages 36--62. Springer, Berlin, 1985.

\bibitem{AGR2012}
I.~Area, E.~Godoy, and J.~Rodal.
\newblock On a class of bivariate second-order linear partial difference
  equations and their monic orthogonal polynomial solutions.
\newblock {\em J. Math. Anal. Appl.}, 389:165--178, 2012.

\bibitem{MR2853206}
I.~Area, E.~Godoy, A.~Ronveaux, and A.~Zarzo.
\newblock Bivariate second-order linear partial differential equations and
  orthogonal polynomial solutions.
\newblock {\em J. Math. Anal. Appl.}, 387(2):1188--1208, 2012.

\bibitem{AG2013}
I. Area and E. Godoy.
\newblock On limit relations between some families of bivariate hypergeometric
  orthogonal polynomials.
\newblock {\em J. Phys. A: Math. Theor.}, 46(035202):11 pp, 2013.

\bibitem{MR783216}
R. Askey and J. Wilson.
\newblock Some basic hypergeometric orthogonal polynomials that generalize
  {J}acobi polynomials.
\newblock {\em Mem. Amer. Math. Soc.}, 54(319):iv+55, 1985.

\bibitem{MR1342384}
N.~M. Atakishiyev, M.~Rahman, and S.~K. Suslov.
\newblock On classical orthogonal polynomials.
\newblock {\em Constr. Approx.}, 11(2):181--226, 1995.

\bibitem{MR1545034}
S.~Bochner.
\newblock \"{U}ber {S}turm-{L}iouvillesche {P}olynomsysteme.
\newblock {\em Math. Z.}, 29(1):730--736, 1929.

\bibitem{MR1561893}
W.~C. Brenke.
\newblock On polynomial solutions of a class of linear differential equations
  of the second order.
\newblock {\em Bull. Amer. Math. Soc.}, 36(2):77--84, 1930.

\bibitem{MR0481884}
T.~S. Chihara.
\newblock {\em An introduction to orthogonal polynomials}.
\newblock Gordon and Breach Science Publishers, New York, 1978.
\newblock Mathematics and its Applications, Vol. 13.

\bibitem{MR0259197}
C.~W. Cryer.
\newblock Rodrigues' formula and the classical orthogonal polynomials.
\newblock {\em Boll. Un. Mat. Ital. (4)}, 3:1--11, 1970.

\bibitem{MR1827871}
C.~F. Dunkl and Y. Xu.
\newblock {\em Orthogonal polynomials of several variables}, 
{\em Encyclopedia of Mathematics and its Applications}, Vol. {\bf 81}.
\newblock Cambridge University Press, Cambridge, 2001.

\bibitem{MR2128719}
G. Gasper and M. Rahman.
\newblock {\em Basic hypergeometric series}, {\em Encyclopedia of Mathematics 
and its Applications}, Vol. {\bf 96}.
\newblock Cambridge University Press, Cambridge, second edition, 2004.

\bibitem{MR2132464}
G. Gasper and M. Rahman.
\newblock {$q$}-analogues of some multivariable biorthogonal polynomials.
\newblock In {\em Theory and applications of special functions}, {\em Dev. Math.},  
Vol. {\bf 13}, pages 185--208. Springer, New York, 2005.

\bibitem{MR2132465}
G. Gasper and M. Rahman.
\newblock Some systems of multivariable orthogonal {A}skey-{W}ilson
  polynomials.
\newblock In {\em Theory and applications of special functions}, {\em Dev. Math.},  
Vol. {\bf 13},  pages 209--219. Springer, New York, 2005.

\bibitem{MR2281173}
G. Gasper and M. Rahman.
\newblock Some systems of multivariable orthogonal {$q$}-{R}acah polynomials.
\newblock {\em Ramanujan J.}, 13(1-3):389--405, 2007.

\bibitem{MR2608419}
J.~S. Geronimo and P. Iliev.
\newblock Bispectrality of multivariable {R}acah-{W}ilson polynomials.
\newblock {\em Constr. Approx.}, 31(3):417--457, 2010.

\bibitem{MR2784425}
J.~S. Geronimo and P. Iliev.
\newblock Multivariable {A}skey-{W}ilson function and bispectrality.
\newblock {\em Ramanujan J.}, 24(3):273--287, 2011.

\bibitem{MR0030647}
W. Hahn.
\newblock \"{U}ber {O}rthogonalpolynome, die {$q$}-{D}ifferenzengleichungen
  gen{\"u}gen.
\newblock {\em Math. Nachr.}, 2:4--34, 1949.

\bibitem{MR918416}
G.~J. Heckman and E.~M. Opdam.
\newblock Root systems and hypergeometric functions. {I}.
\newblock {\em Compositio Math.}, 64(3):329--352, 1987.

\bibitem{hermite1864}
C. Hermite.
\newblock Sur un nouveau d\'eveloppement en s\'erie de fonctions.
\newblock {\em Compt. Rend. Acad. Sci. Paris (Reprinted in {H}ermite, {C}.
  {O}euvres compl\'etes, vol. 2, 1908, Paris, pp. 293--308.)}, 58:93--100 and
  266--273, 1864.

\bibitem{MR2191786}
M. E.~H. Ismail.
\newblock {\em Classical and quantum orthogonal polynomials in one variable},
{\em Encyclopedia of Mathematics and its Applications},  Vol. {\bf 98}.
\newblock Cambridge University Press, Cambridge, 2005.

\bibitem{JACKSON1910}
F.H. Jackson.
\newblock On $q$-definite integrals.
\newblock {\em Q. J. Pure Appl. Math.}, 41:193--203, 1910.

\bibitem{MR1851313}
Y.~Ju Kim, K.~Hyun Kwon, and J.~K. Lee.
\newblock Rodrigues type formula for multi-variate orthogonal polynomials.
\newblock {\em Bull. Korean Math. Soc.}, 38(3):463--474, 2001.

\bibitem{MR2656096}
R. Koekoek, P.~A. Lesky, and R.~F. Swarttouw.
\newblock {\em Hypergeometric orthogonal polynomials and their   {$q$}-analogues}.
\newblock Springer Monographs in Mathematics. Springer-Verlag, Berlin, 2010.

\bibitem{MR1199128}
T.~H. Koornwinder.
\newblock Askey-{W}ilson polynomials for root systems of type {$BC$}.
\newblock In {\em Hypergeometric functions on domains of positivity, {J}ack
  polynomials, and applications ({T}ampa, {FL}, 1991)}, {\em Contemp. Math.},  
Vol. {\bf 138}, pages 189--204. Amer. Math. Soc., Providence, RI, 1992.

\bibitem{MR1431306}
T.~H. Koornwinder.
\newblock Compact quantum groups and {$q$}-special functions.
\newblock In {\em Representations of {L}ie groups and quantum groups ({T}rento,1993)}, 
{\em Pitman Res. Notes Math. Ser.},  Vol. {\bf 311}, pages 46--128.
  Longman Sci. Tech., Harlow, 1994.

\bibitem{MR2345243}
T.~H. Koornwinder.
\newblock The structure relation for {A}skey-{W}ilson polynomials.
\newblock {\em J. Comput. Appl. Math.}, 207(2):214--226, 2007.

\bibitem{MR647129}
M.~A. Kowalski.
\newblock Orthogonality and recursion formulas for polynomials in {$n$}\
  variables.
\newblock {\em SIAM J. Math. Anal.}, 13(2):316--323, 1982.

\bibitem{MR647128}
M.~A. Kowalski.
\newblock The recursion formulas for orthogonal polynomials in {$n$}\
  variables.
\newblock {\em SIAM J. Math. Anal.}, 13(2):309--315, 1982.

\bibitem{MR0228920}
H.~L. Krall and I.~M. Sheffer.
\newblock Orthogonal polynomials in two variables.
\newblock {\em Ann. Mat. Pura Appl. (4)}, 76:325--376, 1967.

\bibitem{MR838967}
J. Labelle.
\newblock Tableau d'{A}skey.
\newblock In {\em Orthogonal polynomials and applications ({B}ar-le-{D}uc, 1984)}, 
{\em Lecture Notes in Math.},  Vol. {\bf 1171}, pages xxxvi--xxxvii.
Springer, Berlin, 1985.

\bibitem{MR2559345}
S.Lewanowicz and P. Wo{{\'z}}ny.
\newblock Two-variable orthogonal polynomials of big {$q$}-{J}acobi type.
\newblock {\em J. Comput. Appl. Math.}, 233(6):1554--1561, 2010.

\bibitem{Lewanowicz20138790}
S. Lewanowicz, P. Wo{\'z}ny, and R. Nowak.
\newblock Structure relations for the bivariate big $q$-{J}acobi polynomials.
\newblock {\em Applied Mathematics and Computation}, 219(16):8790 -- 8802,
  2013.

\bibitem{0765.33009}
A.S. Lyskova.
\newblock {Orthogonal polynomials in several variables.}
\newblock {\em Sov. Math., Dokl.}, 43(1):264--268, 1991.

\bibitem{MR1354144}
I.~G. Macdonald.
\newblock {\em Symmetric functions and {H}all polynomials}.
\newblock Oxford Mathematical Monographs. The Clarendon Press Oxford University
  Press, New York, second edition, 1995.

\bibitem{Maher200895}
H.~M. Maher and R.~M. Youssef.
\newblock Simultaneous determination of ternary drug mixtures using square wave
  polarography subjected to non-parametric and chemometric peak convolution.
\newblock {\em Chemometrics and Intelligent Laboratory Systems}, 94(2):95 --
  103, 2008.

\bibitem{MR1149380}
A.~F. Nikiforov, S.~K. Suslov, and V.~B. Uvarov.
\newblock {\em Classical orthogonal polynomials of a discrete variable}.
\newblock Springer Series in Computational Physics. Springer-Verlag, Berlin, 1991.

\bibitem{0378.33001}
A.~F. Nikiforov and V.~B. Uvarov.
\newblock {\em {{\'E}l{\'e}ments de la th{\'e}orie des fonctions speciales}}.
\newblock {Moscow: Editions Mir}, 1976.

\bibitem{MR922041}
A.~F. Nikiforov and V.~B. Uvarov.
\newblock {\em Special functions of mathematical physics. A unified
  introduction with applications}.
\newblock Birkh{\"a}user Verlag, Basel, 1988.

\bibitem{NIST}
National~Institute of~Standards and Technology.
\newblock Digital library of mathematical functions.
\newblock http://dlmf.nist.gov/ Release date 2012--03--23.

\bibitem{MR2117368}
J.~Rodal, I.~Area, and E.~Godoy.
\newblock Orthogonal polynomials of two discrete variables on the simplex.
\newblock {\em Integral Transforms Spec. Funct.}, 16(3):263--280, 2005.

\bibitem{MR2289245}
J.~Rodal, I.~Area, and E.~Godoy.
\newblock Linear partial difference equations of hypergeometric type:
  orthogonal polynomial solutions in two discrete variables.
\newblock {\em J. Comput. Appl. Math.}, 200(2):722--748, 2007.

\bibitem{MR2390890}
J.~Rodal, I.~Area, and E.~Godoy.
\newblock Structure relations for monic orthogonal polynomials in two discrete
  variables.
\newblock {\em J. Math. Anal. Appl.}, 340(2):825--844, 2008.

\bibitem{tesisjaime}
J. Rodal.
\newblock {\em Polinomios ortogonales en varias variables discretas}.
\newblock Ph.D. Thesis, Departamento de Matem\'atica Aplicada II, Universidade de
  Vigo, Vigo, 2008.

\bibitem{MR834385}
H.~M. Srivastava and P.~W. Karlsson.
\newblock {\em Multiple {G}aussian hypergeometric series}.
\newblock Ellis Horwood Series: Mathematics and its Applications. Ellis Horwood
  Ltd., Chichester, 1985.

\bibitem{MR1717891}
P.~K. Suetin.
\newblock {\em Orthogonal polynomials in two variables}, 
{\em Analytical Methods and Special Functions},  Vol. {\bf 3}.
\newblock Gordon and Breach Science Publishers, Amsterdam, 1999.

\bibitem{MR0372517}
G. Szeg{\H{o}}.
\newblock {\em Orthogonal polynomials}.
\newblock American Mathematical Society, Providence, R.I., fourth edition,
  1975.
\newblock American Mathematical Society, Colloquium Publications, Vol. XXIII.

\bibitem{Thomae1969}
J.~Thomae.
\newblock Beitrage zur theorie der durch die heinesche reihe.
\newblock {\em J. reine angew. Math}, 70:258--281, 1869.

\bibitem{MR1123596}
M.~V. Tratnik.
\newblock Some multivariable orthogonal polynomials of the {A}skey
  tableau-continuous families.
\newblock {\em J. Math. Phys.}, 32(8):2065--2073, 1991.

\bibitem{MR1122519}
M.~V. Tratnik.
\newblock Some multivariable orthogonal polynomials of the {A}skey
  tableau-discrete families.
\newblock {\em J. Math. Phys.}, 32(9):2337--2342, 1991.

\bibitem{MR1411136}
J.~F. van Diejen.
\newblock Self-dual {K}oornwinder-{M}acdonald polynomials.
\newblock {\em Invent. Math.}, 126(2):319--339, 1996.

\bibitem{MR1371383}
N.~Ja. Vilenkin and A.~U. Klimyk.
\newblock {\em Representation of {L}ie groups and special functions}, 
{\em Mathematics and its Applications},  Vol. {\bf 316}.
\newblock Kluwer Academic Publishers Group, Dordrecht, 1995.

\bibitem{MR1215438}
Y. Xu.
\newblock On multivariate orthogonal polynomials.
\newblock {\em SIAM J. Math. Anal.}, 24(3):783--794, 1993.

\bibitem{MR1169912}
Y. Xu.
\newblock Multivariate orthogonal polynomials and operator theory.
\newblock {\em Trans. Amer. Math. Soc.}, 343(1):193--202, 1994.

\bibitem{MR2081045}
Y. Xu.
\newblock On discrete orthogonal polynomials of several variables.
\newblock {\em Adv. in Appl. Math.}, 33(3):615--632, 2004.

\bibitem{MR2132628}
Y. Xu.
\newblock Second-order difference equations and discrete orthogonal polynomials
  of two variables.
\newblock {\em Int. Math. Res. Not.}, 8:449--475, 2005.

\bibitem{MR2929905}
H. Zhu, M. Liu, Y.~L., H. Shu, and H. Zhang.
\newblock Image description with nonseparable two-dimensional {C}harlier and 
{M}eixner moments.
\newblock {\em Int. J. Pattern Recognit. Artif. Intell.}, 25(1):37--55, 2011.

\end{thebibliography}

\end{document}